\newtheorem{theorem}{Theorem}[section]
\newtheorem{definition}[theorem]{Definition}
\newtheorem{lemma}[theorem]{Lemma}
\newtheorem{example}[theorem]{Example}
\def\setR{\ensuremath{\mathbb{R}}}
\def\setC{\ensuremath{\mathbb{C}}}
\def\setZ{\ensuremath{\mathbb{Z}}}
\def\setN{\ensuremath{\mathbb{N}}}
\newcommand{\delbar}{{\ensuremath{\bar\partial }}}
\def\del{{\partial}}
\def\id{\mathrm{Id}}
\def\to{\longrightarrow}
\def\N{\mathcal{N}}
\def\M{\mathcal{M}}
\def\H{\ensuremath{\mathcal{H}}} \def\J{\mathcal{J}}
  \def\F{\mathcal{F}}
\newcommand{\norm}[1]{{\ensuremath{|\!|#1|\!|}}}
\title{Compactness Results for $\mathcal{H}$--Holomorphic Maps}
\author{Jens von Bergmann}
\begin{document}
\maketitle 

{\abstract{$\H$--holomorphic maps are a parameter version of
    $J$--holomorphic maps into contact manifolds. They have arisen in
    efforts to prove the existence of higher--genus holomorphic open
    book decompositions, the existence of finite
    energy foliations and the Weinstein conjecture
    \cite{planar_weinstein}, as well as in folded holomorphic maps
    \cite{folded_holomorphic}.  For all these applications it is
    essential to understand the compactness properties of the space of
    $\H$--holomorphic maps.

    We prove that the space of $\mathcal{H}$--holomorphic maps with
    bounded periods into a manifold with stable Hamiltonian structure
    possesses a natural compactification. Limits of smooth maps are
    {\em neck-nodal maps}, i.e.  their domains can be pictured as
    nodal domains where the node is replaced by a finite cylinder that
    converges to a twisted cylinder over a closed characteristic or a
    finite length characteristic flow line. We show by examples
    that compactness fails without the condition on the periods, and
    we give topological conditions that ensure compactness.}

}

\section{Introduction}
The theory of $J$--holomorphic curves has become an indispensable tool
for symplectic and contact geometry and topology. However in many
potential applications for $J$--holomorphic curves the index for the
curves of interest turns out to be negative, or have dimension too
low.  In particular, this happens when one tries to foliate a
4--manifold, or the symplectization of a contact 3--manifold, by
$J$--holomorphic surfaces of genus $g\ge 1$. In this situation one
considers embedded curves with trivial normal bundle, so the index is
$2-2g$, when one would like the index to be 2, the dimension of the
leaf space of the foliation. To remedy this Abbas Cieliebak and Hofer
suggested using families of $J$--holomorphic curves parameterized by
$H^1(\Sigma;\setR)$ \cite{planar_weinstein}. The same parameter space
is also needed for index reasons in \cite{folded_holomorphic}. These
family version of $J$--holomorphic maps are called $\H$--holomorphic
maps.

The basic setup for $\mathcal H$-holomorphic maps goes as follows (see also
\cite{planar_weinstein} and \cite{folded_holomorphic}). Let $Z$ be a closed oriented manifold of dimension $2n+1$.
\begin{definition}\label{def:stable} $(Z,\alpha,\omega)$ is a {\em
stable Hamiltonian structure} if $\alpha\in \Omega^1(Z)$ and
$\omega\in\Omega^2(Z)$ satisfy
  \begin{eqnarray} 
    \alpha\wedge\omega^{\wedge n}>0,\qquad
    d\omega=0,\qquad \ker(\omega)\subset\ker(d\alpha)\label{eq:stable}
  \end{eqnarray}
\end{definition} 
The stable Hamiltonian structure induces as splitting $TZ=L\oplus F$,
where
\begin{eqnarray*} F=\ker(\alpha),\qquad L=\ker(d\alpha).
\end{eqnarray*} $L$ is called the {\em characteristic foliation} and
the section $R$ of $L$ defined by $\alpha(R)=1$ is called the {\em
  characteristic vector field}. $(F,\omega)$ is a symplectic vector
bundle. In the case that $d\alpha=\omega$, $\alpha$ is called a
contact form, $F$ a contact structure and $R$ the Reeb vector field.

 Let $\J$ be the set of almost complex structures on $F$ that
are compatible with $\omega$, i.e. 
\begin{eqnarray*}
  \J&=&\big\{J\in End(F)|J^2=-\id,\
  \omega(Ju,Jv)=\omega(u,v),\\ 
  &&g_F(u,v)=\omega(Ju,v)\ \mathrm{is\ a\
    Riemannian\ metric\ on\ }F\big\}.
\end{eqnarray*}
Often we will refer to a stable Hamiltonian structure as including a
choice of $J\in\J$. This gives rise to a metric
$g=\alpha\otimes\alpha\oplus g_F$ on $Z$.

\begin{definition}[$\H$--Holomorphic Maps]\label{def:H_hol} 
  Let $(\dot\Sigma,j)$ be a punctured Riemann surface. A map $v:\dot
  \Sigma\to Z$ is called {\em $\H$--holomorphic} if
  \begin{eqnarray} 
    \delbar^F_J v&=&0,\qquad\delbar_J^F=\frac{1}{2}\left(\pi_F\,dv+J\,\pi_F\,dv\,j\right)
    \label{eq:H_F}\\
    d(v^\ast\alpha\circ j)&=&0,\label{eq:H_L}\\
    \int_{\del B_p(\varepsilon)}v^\ast\alpha\circ j&=&0\qquad 
    \forall\;p\in\Sigma\setminus\dot\Sigma\quad\mathrm{and}\ \varepsilon\
    \mathrm{small\ enough}.\label{eq:H_per}
  \end{eqnarray}
\end{definition}
This is a system of elliptic differential equations, a (first order)
Cauchy-Riemann type equation in the $F$ directions and a (second
order) Poisson equation in the $L$ direction.  The last equation
demands that the periods of $v^\ast\alpha\circ j$ vanish at the
punctures. 

$\H$--holomorphic maps can be viewed as $J$--holomorphic maps with
parameter space $H^1(\Sigma;\setZ)$ in the following way. Let
$\H=\H(\Sigma,j)$ of harmonic 1--forms on $\Sigma$, i.e.
\begin{eqnarray*}
  \H&=&\{\nu\in\Omega^1(\Sigma)|\,d\nu=d(\nu\circ j)=0\}.
\end{eqnarray*}
For an $\H$--holomorphic map $v:\Sigma\to Z$ there is a unique
$\eta\in \H$, and a function $a:\dot \Sigma\to \setR$ that is unique
up to addition of a constant, so that
\begin{eqnarray}\label{eq:1-form}
  v^\ast\alpha+da\circ j=\eta\in\H.
\end{eqnarray}
Given an $\H$--holomorphic map $v$ we will
often make implicit use of the splitting of closed 1--forms given in
Equation (\ref{eq:1-form}) without mention. The pair
\begin{eqnarray*}
  \tilde v=(a,v):\dot\Sigma\to\setR\times Z
\end{eqnarray*}
is called the canonical lift of $v$ to the symplectization. It is
unique up to translation in the $\setR$--factor. With this notation
the map $v$ is $\H$--holomorphic if and only if
\begin{eqnarray*}
  \delbar_{\tilde J}\tilde v\in\H^{0,1}_\setC,
\end{eqnarray*}
where $\tilde J$ is the canonical $\setR$--invariant almost complex
structure on the symplectization and $\H^{0,1}_\setC$ is the
$(0,1)$--part of the complexification of the space of harmonic
1--forms $\H$ on $\Sigma$ viewed as taking values in the trivial
complex subbundle $\underline\setC=\tilde
v^\ast(\underline{\setR}\oplus L)\subset\tilde v^\ast T(\setR\times
Z)$. In particular, every $J$--holomorphic map is also
$\H$--holomorphic.

The important feature of the space $\H$ is that it gives a complement
of the coexact 1--forms in the space of coclosed 1--forms. Sometimes it is
convenient to choose a different complement $\tilde\H$ with some
prescribed properties and consider lifts to the symplectization
w.r.t. $\tilde \H$, i.e. maps $\tilde v=(a,v)$ with
$v^\ast\alpha+da\circ j\in\tilde \H$. 

Locally all closed forms are exact, so $\H$--holomorphic maps inherit
all local properties of $J$--holomorphic maps. By standard theory (see
e.g. \cite{hofer_asymptotics}) $\H$--holomorphic maps that satisfy
certain energy assumptions limit to closed characteristics at the
punctures and the maps extend to a continuous map from the radial
compactification $\hat\Sigma$ of $\dot\Sigma$.

In order for $\H$--holomorphic maps to be useful for applications in
symplectic and contact geometry, it is important for the moduli space
of $\H$--holomorphic maps to possess a natural compactification. The
non--compactness of this parameter space $H^1(\Sigma;\setR)$ is the
source for the compactness issues of the space of maps.

\begin{figure}[htbp]
  \centering
  \includegraphics[width=15cm]{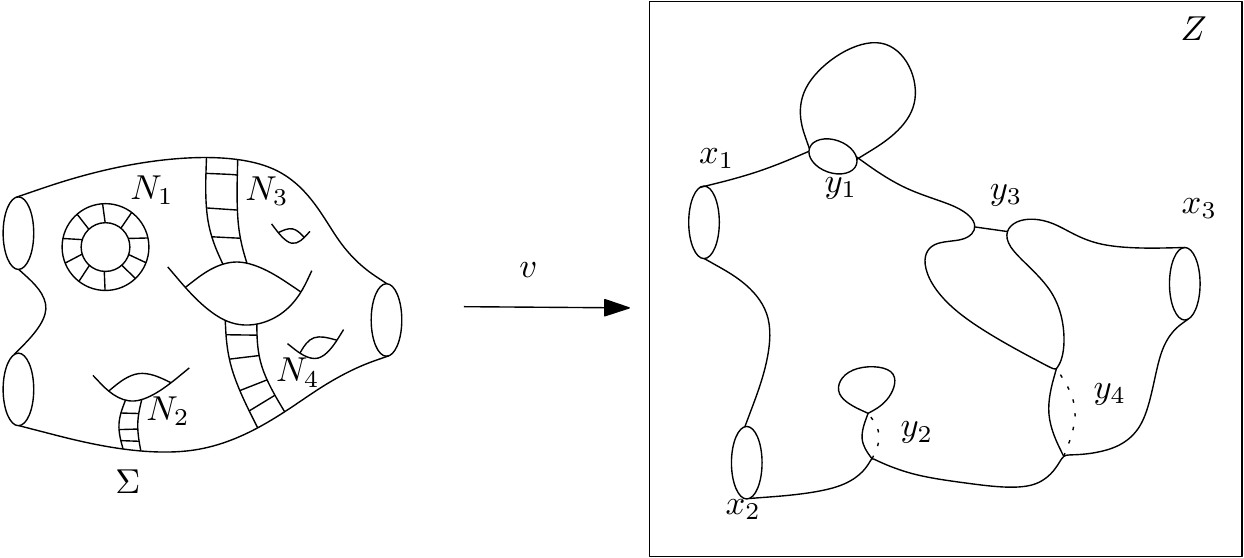}
  \caption{A map in the compactification of smooth $\H$--holomorphic
    maps. The boundary components of $\hat\Sigma$ map to closed
    characteristics $x_1$, $x_2$ and $x_3$, and the necks $N_1$, $N_2$
    and $N_4$ map to closed characteristics $y_1$, $y_2$ and
    $y_4$. The null--homologous neck $N_1$ has vanishing twist
    (``bubbles connect'') while $N_2$ and $N_4$ have in general
    non--vanishing twist. The neck $N_3$ maps to a finite length
    characteristic flow line $y_3$. 
  }
  \label{fig:general_map}
\end{figure}

The idea of using a parameter space to change the index of an equation
to fit applications has a long history, and usually requires a
delicate analysis of solutions. For example, Junho Lee
\cite{MR2060027} considered families of $J$--holomorphic maps into
K\"ahler manifolds with non--compact parameter space given by harmonic
1--forms on the target. While in that case the space of maps is in
general not compact, he was able to show that in certain interesting
cases the space of maps stay in a compact subset of the parameter
space and thus are compact.

We show that the situation for $\H$--holomorphic maps is quite
similar.  The space of maps is in general not compact (Theorem
\ref{thm:non-compactness}), and boundedness of the ``periods''
(Definition \ref{def:periods}) is a necessary and sufficient condition
on a sequence of smooth $\H$--holomorphic maps to have a convergent
subsequence (Theorem \ref{thm:compactness}, see Figure
\ref{fig:general_map}). This condition is automatically satisfied in
situations arising for important applications (Theorem
\ref{thm:compactness_S^1}). In a sequel to this we are applying these
results to nicely embedded $\H$--holomorphic maps and open book
decompositions \cite{h-hol_open_books}.

\section{Main Results}
\label{sec:compactness_statement}
In order to understand the precise compactness statement we briefly
survey some related compactness results in the literature.

\label{sec:intro} Bubble tree convergence for $J$-holomorphic maps was
established in the early '90s (\cite{gromov}, \cite{parker_wolfson},

\cite{ye}). These results build on the 
bubbling phenomenon of conformally invariant elliptic equations with
uniform energy bounds first studied by Sachs and Uhlenbeck
\cite{sachs_uhlenbeck}.

For harmonic maps, bubble tree convergence in a fixed homology class
with fixed domain complex structure was proved by Parker
\cite{parker_harmonic}. There it was also observed that a similar
result with varying complex structure on the domain does not hold due
to loss of control over the ``neck'' regions.

Chen and Tian proved \cite{chen_tian} that compactness for energy
minimizing finite energy harmonic maps with domain complex structure
converging in $\bar{\M}_g$ can be achieved if one fixes the
homotopy class of maps, rather than just homology. Then the neck maps
converge to finite length geodesics.

$J$-holomorphic maps into contact manifolds do not have uniform (or
even finite) $W^{1,2}$-energy bounds. To overcome this Hofer, Wysocki
and Zehnder (\cite{hofer_asymptotics}) tailored a suitable notion of
energy that is a homological invariant and guarantees bubble tree
convergence where nodes (and punctures) can ``open up'' to wrap closed
characteristics. In \cite{SFT_compactness} this has been extended to
the case of manifolds with stable Hamiltonian structure as targets and
allowing certain degenerations of the almost complex structure on the
target.

In the case of $\H$-holomorphic maps there are again no uniform
$W^{1,2}$-energy bounds. Roughly speaking, the $\H$-holomorphic map
equation into a $(2n+1)$- dimensional manifold with stable Hamiltonian
structure is a mixture of a $2n$-dimensional $J$-holomorphic map
equation in the almost contact planes and a 1-dimensional harmonic map
equation in the characteristic direction. This dual nature is
reflected in the compactness statement as neck maps converge to
``twisted cylinders'' over closed characteristics or finite length
characteristic flow lines.

In order to account for the possibility of necks converging to
characteristic flow lines we make the following definition for the
space of domains. 
\begin{definition}\label{def:neck-nodal_domain} Fix a genus $l$
  reference surface $\hat\Sigma$ with $m$ boundary components.  Denote
  the surface obtained by collapsing each boundary component to a
  point by $\Sigma$. A {\em neck-nodal domain} of genus $l$ with $m$
  boundary components and $k$ necks is given by a map
  \begin{eqnarray*} 
    \pi:\hat\Sigma\to C
  \end{eqnarray*} 
  to a nodal curve $C$ with $k$ nodes and $m$ marked points such that
  \begin{enumerate}
  \item each boundary component is mapped to a marked point in $C$,
    called a {\em puncture},
  \item there are $k$ embedded loops $\gamma_i$ with pairwise disjoint
    tubular neighborhoods $\nu(\gamma_i)$ bounded away from each other
    and the boundary of $\hat\Sigma$ and not containing any of the marked
    points so that each {\em neck domain}
    \begin{eqnarray*} 
      N_i=\overline{\nu(\gamma_i)}=
      \left[-\frac12,\frac12\right]\times S^1,\qquad 1\le i\le k
    \end{eqnarray*} 
    maps to a distinct node of $C$, and
  \item $\pi$ is a diffeomorphism from the smooth part of $\hat\Sigma$
    \begin{eqnarray*}
      \Sigma_0=\hat\Sigma\setminus\left(\del\Sigma\cup\mathcal{N}\right)
      \qquad\mathrm{where}\quad \mathcal{N}=\bigcup_{i=1}^m N_i
    \end{eqnarray*}
    onto the curve $C^0$ obtained from $C$ by removing the punctures
    and nodes.
  \end{enumerate} 
  Thus $\pi$ induces a complex structure on the punctured surface
  $\Sigma_0$. We denote the space of neck-nodal domains of genus $l$
  with $m$ boundary components and $n$ marked points modulo
  diffeomorphisms of $\hat\Sigma$ preserving the boundary components by
  ${\M}^N_{l,m+n}$.
\end{definition} 

\begin{figure}[htbp]
  \centering
  \includegraphics[width=11cm]{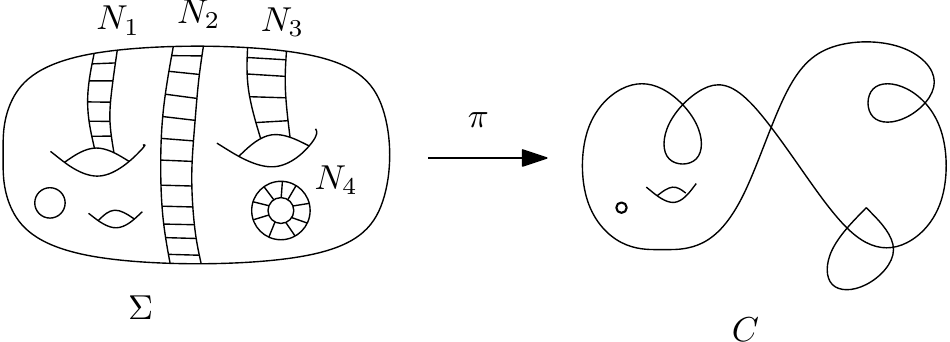}
  \caption{Neck--nodal domain.}
  \label{fig:domain}
\end{figure}

There is an obvious bijection
\begin{eqnarray*} 
  \M^N_{l,m+n}\to \M^{\$}_{l,m+n}
\end{eqnarray*} 
to the space of decorated nodal surfaces defined in
\cite{SFT_compactness}, and we endow $\M^S_{l,m+n}$ with the
same topology as $\M^{\$}_{l,m+n}$.

The neck domains $N_i$ don't carry a well defined conformal structure.
Intuitively they should be viewed as flat cylinders with
infinitesimal circumference.

\begin{definition}\label{def:neck_map} 
  A {\em neck map} $v:N=[-\frac12,\frac12]\times S^1\to Z$ is
  a map of the form
  \begin{eqnarray*} 
    v(s,t)=x(S\cdot s+T\cdot t)
  \end{eqnarray*} 
  where $x:\setR\to Z$ is a flow line of the characteristic
  vector field and $S,T\in\setR$. $T$ is called the {\em period} of
  the neck and $S$ is called the {\em twist} of the neck. 
\end{definition}
Note that if $T\ne 0$ then $x$ is necessarily a $T$-periodic orbit.

Each neck region $N\subset\hat\Sigma$ defines an element $[N]\in
H_1(\Sigma;\setZ)$, where $\Sigma$ is as always defined to be the
surface obtained from $\hat\Sigma$ by collapsing the boundary components.
\begin{definition}\label{def:minimal_twist} 
  Let $\hat\Sigma$ be a neck-nodal domain with neck domains
  $\mathcal{N}=\bigcup_{i=1}^k N_i$. Then the collection of neck maps
  $v:\mathcal{N}\to Z$ has {\em minimal twist} if whenever $\sum_{i\in
    I}[N_i]=0\in H_1(\Sigma;\setZ)$ for some index set $I$, then there
  exists a collection of non--negative real numbers $l_i$, $i\in I$
  with $\sum_{i\in I}l_i=1$ so that
  \begin{eqnarray*}
    \sum_{i\in I}l_i\,S_i=0
  \end{eqnarray*}
  where $S_i$ is the twist of $v|_{N_i}$. 
\end{definition}

We need one more definition ensuring that maps from a singular domain
can be lifted to the symplectization.
\begin{definition}\label{exact}
  A neck region $N$ is called {\em non--separating} if both boundary
  components of $N$ are adjacent to the same connected component of
  the smooth part $\Sigma_0$ of $\hat\Sigma$. 

  An $\H$--holomorphic map $v:\Sigma_0\to Z$ from the smooth
  part $\Sigma_0=\hat\Sigma\setminus(\N\cup \del\hat\Sigma)$ is called
  {\em exact} if it lifts to a map to the symplectization so that the
  $\setR$--component of the lift extends continuously by a constant
  over the non--separating components of the necks.
\end{definition}

We are now prepared for the definition of neck--nodal maps.
\begin{definition}\label{def:neck-nodal_map} 
  An $\mathcal H$-holomorphic map from a neck-nodal domain
  $(\hat\Sigma,j)$ with neck domains $\mathcal{N}=\left\{N_i\right\}_{1\le
    i\le k}$ into $Z$ is a continuous map from $\hat\Sigma$ into $Z$ that
  restricts to an exact $\mathcal H$-holomorphic map from the smooth part
  $\Sigma_0$ into $Z$, and a minimal twist neck map on the neck domains.
\end{definition} 
This definition allows for $\H$--holomorphic maps in the
compactification with qualitatively different behavior from
$J$--holomorphic maps. This is necessary as such maps occur in
examples (see the end of Section \ref{sec:circle_invariant}). If a
neck region $N$ is homologically trivial in $\Sigma$, then a minimal
twist neck map has vanishing twist on $N$. This means that
$\H$--holomorphic maps exhibit ``zero distance bubbling'', just like
in the $J$--holomorphic and harmonic map case. 

To illustrate the meaning of the minimal twist condition further let
$g$ be the genus of $\Sigma$ and denote the genus of the normalization
of corresponding nodal curve $C$ by $\tilde g$. Let $n$ be the number
of neck regions and $r$ the number of independent relations of the
neck regions in $H_1(\Sigma;\setZ)$. Then $g-\tilde g=n-r$. In this
equation $g-\tilde g$ is half the number of harmonic 1--forms that are
lost in the singular domain. Half of the lost harmonic 1--forms are
fixed as the periods of necks, and the other half is encoded in the
n twist parameters of the necks satisfying the $r$ relations.

There are several notions of energy that are important for
$\H$--holomorphic maps. With $\mathcal{A}$ the space of probability
measures on the real line we make use of the following standard
definition for $J$--holomorphic maps.
\begin{definition}\label{def:energy}
  Let $\mathcal{A}$ be the space of smooth probability measures on the
  real line.
  The {\em $\alpha$--energy $E_\alpha(v)$} and {\em $\omega$--energy
    $E_\omega(v)$} are
  \begin{eqnarray}
    \label{eq:E_alpha} E_\alpha(v)=\sup_{f\in\mathcal{A}}
    \int_{\dot\Sigma} f\circ a\,da\circ j\wedge da,\qquad
    \label{eq:E_omega} E_\omega(v)=\int_{\dot\Sigma} v^\ast\omega.
  \end{eqnarray}
\end{definition}
The integrands are pointwise non--negative functions on $\dot\Sigma$
and both energies are invariants of the relative homology class of the
map $v$. By the definition of stable Hamiltonian structure Definition
\ref{def:stable} there exists a constant $M>0$ so that
\begin{eqnarray*}
  \left|\int_{S}v^\ast d\alpha\right|\le M\int_S v^\ast \omega
\end{eqnarray*}
for any $\H$--holomorphic map $v$ from a Riemann surface, possibly
with boundary, $S$. In particular, finite $\omega$--energy of an
$\H$--holomorphic map $v:\dot\Sigma\to Z$ implies that
$\left|\int_{S}v^\ast d\alpha\right|$ is also finite for any subdomain
$S\subset\dot\Sigma$.

We will see that in order to prove compactness of a family of
$\H$--holomorphic maps it is necessary and sufficient that the
parameters stay in a compact subset of
$H^1(\Sigma;\setR)$. Compactness is to be understood with respect to
the topology induced by the period map (on a basis of
$H_1(\Sigma;\setZ)$). Here it is essential that the harmonic 1--form
$\eta$ in question is defined as the harmonic part of $v^\ast\alpha$,
and not of $v^\ast\alpha\circ j$ if one wishes to consider sequences
of complex structures converging to the boundary of
$\M_{g,m}$.

We wish to find a useful criterion to check if the periods of the
harmonic parts of $v_n^\ast\alpha$ of a sequence of $\H$--holomorphic
maps remains bounded. To this end, we associate to a canonical family
of curves along which we will evaluate the integrals of
$v^\ast\alpha$. It turns out that one-cylinder Strebel differentials
are a convenient tool for this. We quickly outline the relevant
portions of the theory, for more details we refer the interested
reader to \cite{MR743423}.

If $\Sigma$ has genus 0, then $H^1(\Sigma;\setR)$ is trivial, so every
$\H$--holomorphic map from a domain of genus 0 is automatically
$J$--holomorphic. Since the compactness properties of $J$--holomorphic
maps is already well understood we will restrict our attention to
domains of genus $\ge 1$.

A holomorphic quadratic differential is a tensor, locally in complex
coordinates $z$, given as $\phi=\phi(z)dz^2$, where $\phi(z)$ is a
holomorphic function.  $\phi$ defines a singular Euclidean metric
$|\phi(z)|\,|dz|^2$ on $\Sigma$ with finitely many singular points
corresponding to the zeros of $\phi$. $\phi$ determines a pair of
transverse measured foliations $\F_v(\phi)$ and $\F_h(\phi)$ called
the horizontal and vertical foliations given by the preimages of the
real and imaginary axes under $\phi$, respectively. Near a singular
point of $\phi$ of order $k$, $\phi$ is given in local coordinates $z$
as $z^kdz^2$. The union of the leaves both beginning and ending at a
critical point is called the critical graph $\Gamma$.

Given a non--separating simple closed curve $\gamma$ on $(\Sigma,j)$,
there exists a holomorphic quadratic differential $\phi$, called the
{\em Strebel differential}, so that its horizontal foliation has
closed leaves in the free homotopy class $[\gamma]$. Denote the set of
such Strebel differential associated with $[\gamma]$ and $j$ by
$\Phi(\gamma,j)$.

The complement $\Sigma\setminus\Gamma$ of the critical graph $\Gamma$
is a metric (w.r.t. $|\phi(z)|\,|dz|^2$) cylinder $R\subset\Sigma$. If
$\Sigma$ has genus 1, then there is no critical graph and we use one
regular leaf for $\Gamma$. For simplicity we normalize $\phi$ so that
$R=(0,1)\times S^1$ has height 1. For details see \cite{MR743423},
Theorem 21.1. Let $\{\sigma_s(\phi):S^1\to R\}_{s\in(0,1)}$ be a
parametrization the closed leaves of the horizontal foliation
$\F_h(\phi)$.

Armed with this definition we are ready to define the {\em periods} of
$\H$--holomorphic maps.
\begin{definition}\label{def:periods}
  Let $\gamma$ be a non--separating simple closed curve in $\Sigma$
  and $\mu$ a 1--form on $\Sigma$. Then the {\em period of $\mu$ along
    $\gamma$} are
  \begin{eqnarray*}
    P_{[\gamma]}(\mu)=\sup_{\phi\in\Phi(j,[\gamma])}\sup_{s\in(0,1)}
      \left|\int_{\sigma_s(\phi)}\mu\right|.
  \end{eqnarray*}
  
  For an $\H$--holomorphic map $v:\dot\Sigma\to Z$ define the period
  \begin{eqnarray*}
    P_{[\gamma]}(v)=P_{[\gamma]}(v^\ast\alpha).
  \end{eqnarray*}
  
  We say that a family of maps $(v_n,j_n)$ has {\em bounded periods} if
  there exists a collection of simple closed curves $\gamma_i$ that
  form a basis of $H_1(\Sigma,\setZ)$ so that the associated periods
  $P_{[\gamma_i]}(v_n)$ are uniformly bounded.
\end{definition}

The definition of the periods along a curve $\gamma$ is somewhat
abstract. Intuitively, we think of the periods as the periods of the
non--closed form $v^\ast\alpha$. The one--cylinder Strebel
differentials allow us to define the periods in a way that is
invariant under the gauge action by diffeomorphisms, and independent
of the choice of conformal metric on the domain. It turns out that the
periods are essentially given by the periods of the harmonic part of
the co--closed form $v^\ast\alpha$ (see Lemma \ref{lem:eta_periods}).

Bounded periods are a necessary condition for any meaningful
compactness result, as the a sequence of maps with unbounded periods
has unbounded diameter in the image. The following theorem shows that
the converse is also true, i.e. that bounded periods lead to
compactness.

\begin{theorem}\label{thm:compactness}
  Let $(Z,\alpha,\omega,J)$ be a stable Hamiltonian structure so that
  all periodic orbits are Morse or Morse-Bott. The space of smooth
  $\H$-holomorphic maps into $Z$ with uniformly bounded $\omega$ and
  $\alpha$--energies with uniformly bounded periods has compact
  closure in the space of neck--nodal $\H$--holomorphic maps.
\end{theorem}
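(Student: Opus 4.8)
The plan is to reduce the compactness statement for $\H$--holomorphic maps to the SFT compactness theorem of \cite{SFT_compactness} for the canonical lifts $\tilde v_n = (a_n, v_n)$ to the symplectization $\setR \times Z$, handling separately the two ways in which non-compactness can occur: degeneration of the domain complex structure and unboundedness of the harmonic parameter $\eta_n$. Since $\H$--holomorphic maps are locally $J$--holomorphic and the lifts $\tilde v_n$ satisfy $\delbar_{\tilde J} \tilde v_n \in \H^{0,1}_\setC$, the local bubbling analysis, the asymptotic convergence to closed characteristics at punctures (via \cite{hofer_asymptotics}), and the gradient bounds away from necks all carry over verbatim from the $J$--holomorphic theory. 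The genuinely new content is to show that the bounded-periods hypothesis prevents the parameter $\eta_n \in \H(\Sigma, j_n)$ from escaping to infinity, and to identify the limiting behavior on the neck regions as minimal-twist neck maps.

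First I would establish the a priori gradient bound. Away from the necks $\mathcal{N}$ and punctures, the uniform $\omega$-- and $\alpha$--energy bounds together with the stable Hamiltonian inequality $|\int_S v^\ast d\alpha| \le M \int_S v^\ast \omega$ give, after the standard Sachs--Uhlenbeck rescaling argument, either a uniform $C^1$ bound on the smooth part $\Sigma_0$ or a concentration of energy producing a bubble; the finite collection of bubbling points accounts for the nodes, and I would match these bubbling loci with the necks $N_i$ of the neck-nodal domain. The second step is to control the parameter: using Lemma \ref{lem:eta_periods} (the periods $P_{[\gamma]}(v)$ are essentially the periods of the harmonic part $\eta$), the bounded-periods hypothesis on a homology basis $\{\gamma_i\}$ bounds the periods of $\eta_n$ on that basis, hence bounds $\eta_n$ in the finite-dimensional space $\H(\Sigma, j_n)$. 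The subtlety here is that as $j_n$ degenerates toward the boundary of $\M_{g,m}$, the space $\H$ itself degenerates; this is precisely why the periods are defined via one-cylinder Strebel differentials, which give a gauge-invariant and conformally robust measurement, and why $\eta$ must be taken as the harmonic part of $v^\ast \alpha$ rather than $v^\ast \alpha \circ j$. I would pass to a subsequence so that $j_n$ converges in $\M^N_{l,m+n}$ and the bounded parameters converge, with the part of $\eta_n$ whose periods concentrate on collapsing curves being absorbed into the twist and period data of the limiting neck maps.

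Third, I would analyze the neck regions. On each $N_i = [-\frac12,\frac12]\times S^1$, after the biholomorphic reparametrization that stretches the collapsing annulus to a long cylinder $[-R_n, R_n]\times S^1$ with $R_n \to \infty$, the gradient of the $F$--component decays by the asymptotic analysis, so the limit is governed by the characteristic direction $L$; the map converges to $x(S s + T t)$ where $x$ is a characteristic flow line, $T$ is the period (forced to be a closed orbit when $T \neq 0$) and $S$ is the twist. The bounded $\alpha$--energy forces $T$ to lie in a compact set, giving either a twisted cylinder over a closed orbit or, when the period degenerates, a finite-length characteristic flow line. The minimal-twist condition then emerges from the constraint \eqref{eq:H_per} together with the requirement that the lift $\tilde v$ be exact: whenever $\sum_{i\in I}[N_i] = 0$ in $H_1(\Sigma; \setZ)$, the closedness of $v^\ast \alpha \circ j$ and continuity of the $\setR$--component of the lift across non-separating necks impose exactly the linear relation $\sum_{i\in I} l_i S_i = 0$ on the twists.

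The main obstacle I expect is the second step — controlling the degeneration of the harmonic parameter $\eta_n$ as the domain complex structure $j_n$ approaches the boundary of moduli space. This is the point at which naive homology-class bounds fail (as in the harmonic map counterexamples of Parker and the loss-of-control phenomenon noted for varying complex structure), and it is the reason the entire Strebel-differential machinery and the period definition \ref{def:periods} were set up. The crux is to show that the Strebel-based periods $P_{[\gamma_i]}(v_n)$, being insensitive to the choice of conformal metric, continue to bound the relevant periods of $\eta_n$ uniformly even as the leaves $\sigma_s(\phi_n)$ of the horizontal foliation pinch, and that the excess period accumulating on pinching curves is accounted for exactly by the period $T$ and twist $S$ of the limiting neck map rather than lost or blown up. Reconciling this Strebel-period control with the cylindrical rescaling on the necks and verifying that no additional energy or twist leaks away is the technical heart of the argument.
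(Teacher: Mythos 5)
Your high-level architecture (periods control the harmonic part $\eta_n$, thick-part convergence by elliptic theory, twisted-cylinder neck limits, minimal twist from homology relations) does track the paper, but the proposal leaves open precisely the step you yourself call the crux, and the paper closes it with a specific estimate you never formulate: Lemma \ref{lem:harmonic_bounded}, the bound $\norm{\eta}_\infty\le C\norm{P(\eta)}$ in the \emph{cylindrical metric} with $C$ independent of the (degenerating) complex structure $j$, proved by a contradiction argument that uses the explicit Fourier description of harmonic 1--forms on long flat cylinders (Lemma \ref{lem:eta_on_necks}) to upgrade thick-part convergence to convergence on the necks and then kills the limit via the Hodge theorem on the normalization. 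Likewise the transfer from map periods to $\eta$-periods is not free: it is Lemma \ref{lem:da_periods}, a Stokes argument over the Strebel ring domain showing the coexact part $da\circ j$ has leaf-periods bounded by $E_{d\alpha}(v)+|T|$. Without these two inputs your step two is a statement of intent rather than an argument; note also that your step one has the logic backwards, since the $C^1$ bound does \emph{not} follow from the energy bounds alone (the counterexample of Theorem \ref{thm:non-compactness} has bounded energies) --- in the paper the sup-norm bound on $\eta_n$ comes first and only then does standard bubbling apply (Lemma \ref{lem:eta_bounded_rescale}). Finally, your framing as a reduction to \cite{SFT_compactness} cannot be taken literally: the lifts solve the inhomogeneous equation $\delbar_{\tilde J}\tilde v_n\in\H^{0,1}_\setC$, and the limit objects (finite-length characteristic flow lines, twisted cylinders) are exactly the phenomena absent from SFT compactness; the paper never invokes that theorem.

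Two further gaps sit on the necks. First, you appeal to ``the asymptotic analysis'' for decay of the $F$-component on a long neck, but the map is not $J$--holomorphic there, since $\eta_n$ has a nonvanishing $ds$-component; the paper's key device is to untwist by the characteristic flow, $\tilde v(s,t)=\phi_{-\tilde S\,s}v(s,t)$, which is genuinely holomorphic for a domain-dependent almost complex structure $J_s$ varying in a compact family (this is where bounded twist enters), and only after this does \cite{hofer_cylinders} apply, yielding Lemma \ref{lem:convergence_neck}. Your proposal contains no substitute for this step. Second, your derivation of the minimal-twist condition from exactness of the lift together with Equation (\ref{eq:H_per}) is the wrong mechanism and would not produce the weights: in the paper (Lemma \ref{lem:minimal_twist}) the relation $\sum_{i\in I}l_i\,S_i=0$ comes from closedness of $\eta_n\circ j_n$, which forces the \emph{average} twists $\tilde S_i^n=S_i^n/(2R_i^n)$ over a homologically trivial collection of necks to sum to zero exactly for each $n$; the weights $l_i\in[0,1]$ with $\sum_{i\in I}l_i=1$ then arise as limits of the normalized reciprocal neck lengths, i.e.\ $l_i^n$ proportional to $1/R_i^n$. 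Exactness of the limit map is a separate, final observation about convergence of the lifts, not the source of the twist relation, and nothing in your plan produces these neck-length weights.
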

In Section \ref{sec:circle_invariant} we give examples of topological
conditions that guarantee that the periods of families of maps are
uniformly bounded, leading to compact moduli spaces. Another such
condition guaranteeing bounded periods is given in a follow--up paper
\cite{h-hol_open_books} when considering nicely embedded maps.

The condition on the periods is not vacuous as the following result
shows.
\begin{theorem}\label{thm:non-compactness} 
  Let $(\dot T,i)$ be the twice--punctured standard torus and let
  $S^3$ be equipped with the standard contact form and complex
  structure. There exists a smooth family $v_t$ of $\H$--holomorphic
  maps parametrized by $\setR$ so that a sequence $v_{t_n}$,
  $t_n\in\setR$ has a convergent subsequence if and only if $t_n$ has
  a convergent subsequence in $\setR$ as $n\to\infty$. The
  width of the image becomes unbounded as $t_n\to\infty$.
\end{theorem}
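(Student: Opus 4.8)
The plan is to construct an explicit family on the twice-punctured torus by exploiting the parameter freedom in the space $\H$ of harmonic $1$--forms. On the standard torus $T=\setC/\Lambda$ with its flat complex structure, the space $\H(T,i)$ is two--real--dimensional, spanned by $dx$ and $dy$; removing two punctures does not change the relevant cohomology. First I would write down the standard contact form $\alpha_0$ on $S^3$ together with its Reeb field $R$, whose orbits are the Hopf fibers, and recall the splitting $v^\ast\alpha+da\circ i=\eta\in\H$ from Equation~(\ref{eq:1-form}). The idea is that the non--compactness of the period parameter is directly driven by letting the harmonic part $\eta$ run off to infinity along a fixed ray in $\H(T,i)$, while keeping the $F$--component of the map essentially fixed.

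Concretely, I would take a $J$--holomorphic map $u$ from the twice--punctured sphere (a Hopf cylinder) into $S^3$ that covers a single Hopf fiber, and then transplant it to the torus. Since a neighborhood of the core circle of the torus is conformally a long cylinder, I can pull $u$ back to $\dot T$ via a degree--one map collapsing the torus onto this cylinder direction, and then twist by the characteristic flow. The family $v_t$ is obtained by composing with the time--$t$ Reeb flow along the $dy$--direction, i.e.\ by adding $t\,dy$ to the harmonic part $\eta$; equivalently $v_t(x,y)=\phi_{R}^{\,t\,y}\bigl(u(x)\bigr)$ where $\phi_R$ is the Reeb flow. One checks that each $v_t$ satisfies~(\ref{eq:H_F}), (\ref{eq:H_L}) and~(\ref{eq:H_per}): the $F$--part is unchanged so the Cauchy--Riemann equation persists, while $v_t^\ast\alpha$ acquires a closed correction whose harmonic representative is $t\,dy$, so the period along the corresponding cycle $\gamma$ is exactly $t$ (up to a fixed nonzero constant). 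This makes the dependence of $P_{[\gamma]}(v_t)$ on $t$ affine and injective.

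For the convergence statement I would argue in both directions. If $t_n\to t_\infty\in\setR$, then $v_{t_n}\to v_{t_\infty}$ in $C^\infty_{\loc}$ since the maps depend smoothly on the parameter and live over a compact target. Conversely, if $t_n\to\infty$, the period $P_{[\gamma]}(v_{t_n})\to\infty$ by the computation above, and by the remark preceding Theorem~\ref{thm:compactness} unbounded periods force the diameter of the image to be unbounded, so no subsequence can converge to a neck--nodal map of bounded energy; this simultaneously verifies the final sentence about the width of the image. I would phrase this carefully using the fact that a convergent subsequence in the neck--nodal topology would have uniformly bounded periods by Theorem~\ref{thm:compactness}, giving a contradiction.

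The main obstacle I expect is the bookkeeping that confirms $v_t$ really satisfies all three defining conditions of an $\H$--holomorphic map simultaneously, rather than merely being a $J$--holomorphic map pushed around by a flow. In particular I must check that the puncture period condition~(\ref{eq:H_per}) holds for every $t$ and that the energies $E_\alpha(v_t)$ and $E_\omega(v_t)$ stay uniformly bounded along the family, so that non--compactness is genuinely attributable to the period parameter and not to energy concentration or escape. Verifying that the harmonic part of $v_t^\ast\alpha$ is exactly $\eta_0+t\,dy$, and hence that the twisting contributes a linear term to the period while leaving $E_\omega$ fixed, is the delicate point; once that is in hand, the equivalence of convergence of $t_n$ and convergence of $v_{t_n}$ follows directly from the period--diameter relationship and Theorem~\ref{thm:compactness}.
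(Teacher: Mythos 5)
There is a genuine gap, and it sits exactly at the heart of the theorem. Your central device is to define $v_t(x,y)=\phi^{t\,y}_R(u(x))$, i.e.\ to add $t\,dy$ to the harmonic part. But for $y\in\setR/\setZ$ the function $f_t(x,y)=t\,y$ is circle--valued only when $t$ lies in the period lattice of the Reeb flow: for non--integral $t$ the map $\phi^{t\,y}_R(u(x))$ is not well defined on the torus, since going around the $y$--circle changes the flow time by $t\notin\setZ$. This is precisely the point the paper makes in Section \ref{sec:circle_invariant}: the gauge group is $H\subset Map(\hat\Sigma,S^1)$ of \emph{harmonic circle--valued} functions, and $H/S^1=H^1(\Sigma;\setZ)$ is a discrete lattice. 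So your construction produces only the discrete sequence $v_n=f^{n}\ast v$, $n\in\setZ$, not a smooth family parametrized by $\setR$ --- and the theorem asserts exactly such a smooth family. One cannot repair this by ``dialing'' the harmonic part continuously, because the periods $\int_\gamma v^\ast\alpha$ are not free parameters: $d(v^\ast\alpha)=v^\ast d\alpha\ne 0$, so these periods change continuously only through a flux, i.e.\ only when the underlying curve itself moves. The substantial content of the proof is the interpolation between $v$ and $f\ast v$ through genuine $\H$--holomorphic maps, and nothing in your proposal supplies it.

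The paper constructs this interpolation by projecting to the base of the Hopf fibration $\pi:S^3\to S^2$: the projected maps are degree--$d$ rational functions with prescribed poles, classified (up to the $\setC^\ast$--action) by degree--zero divisors $D$ with vanishing abelian sum $\mu(D)=0$. It then moves a pole along a homologically nontrivial loop while keeping $\mu(D_t)=0$ (explicitly on the flat torus: $p_1(t)=(t,0)$, $q_1(t)=(t+\frac12,0)$, $p_2=(\frac12,\frac12)$, $q_2=(0,\frac12)$), lifts the resulting loop of rational maps to a smooth $\setR$--family of $\H$--holomorphic maps into $S^3$, and computes the flux through $\gamma$ to be $c\,m_1\ne 0$ per loop, so $\F(v_1,v_t)(\gamma)$ grows like $m_1 t$ and the periods are unbounded; this is where Abel's theorem, absent from your proposal, does the real work. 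Two secondary problems: there is no degree--one holomorphic map collapsing the (punctured) torus onto a cylinder, so your ``transplanted'' base map $u$ is not actually constructed; and a map into a single Hopf fiber has constant projection to $S^2$, a degenerate configuration in which the flux computation you would need breaks down. Your ``only if'' direction (unbounded periods preclude convergent subsequences, via the discussion before Theorem \ref{thm:compactness}) is fine, but it is the smooth--family construction that carries the theorem, and that is the part your argument cannot deliver.
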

The existence of non--compact smooth families of maps stands in stark
contrast to the case of $J$--holomorphic maps and destroys any hope
for a general compactness theorem for $\H$--holomorphic maps.

\section{Bounded Periods and Compactness}\label{sec:energy} 
In this section we prove Theorem \ref{thm:compactness}. In the first
part, we show that the requirement of bounded periods of the maps
leads to bounded periods of the harmonic 1--forms, and that every
sequence with bounded periods possesses a subsequence that converges
on compact subsets of the complement of the necks.

In the following subsection we investigate the convergence of the maps
on the necks, where the requirement of bounded periods will lead to
neck maps with minimal twist.

Putting these results together we then proceed to prove Theorem
\ref{thm:compactness}.

First we explain the metrics we are using on the domains.  Following
the construction in Section 4 of \cite{IP_sum} we choose a family of
metrics on the space of the domains $(\dot\Sigma,j)$,
$j\in\overline\M_{g,n}$ of $\H$--holomorphic maps that comes from a
metric on the universal curve on the thick part of the domain and is
given by the cylindrical metric on (a cofinite subset of) the thin
part. More specifically we adopt the definition of the weight function
$\rho:(\dot\Sigma,j)\to \setR$ and work with the metric
$g=\rho^{-2}\cdot \tilde g$, where $\tilde g$ is the restriction of a
Riemannian metric on the universal curve. In particular, near a
puncture we have local coordinates $C_0=[0,\infty)\times
S^1\subset\dot\Sigma$ with metric $g=ds^2+dt^2$, $s\in [0,\infty)$ and
$t\in S^1$, and $\rho^2(s,t)=8e^{-2s}$, and on a neck cylinder
$C_R=[-R,R]\times S^1\subset\dot\Sigma$ we have the same flat metric
and $\rho^2(s,t)=8e^{-2R}\cosh(2s)$. Given a sequence of maps and
conformal structures $(v_m,j_m)$ we may adjust the space
$\overline\M_{g,n}$ every time we rescale (a finite number of times)
by adding marked points as needed, and we will adjust the metrics
accordingly without making explicit mention of this.

We refer to this metric as the {\em cylindrical metric} and will use
it throughout for estimates. For the final statement of the
compactness theorem we will however use a different metric, namely the
non--conformal metric where the cylinders of the thin part are rescaled
along the height of the cylinders to $[-1,1]\times S^1$, where the
scaling function depends on the asymptotic approach of the maps to a
closed characteristic. This metric extends to a smooth metric on the
space of neck--nodal domains, and the convergence results in the
compactness statement are to be understood with respect to this
metric.

\begin{lemma}\label{lem:da_periods}
  Let $(v,j)$ be a an $\H$--holomorphic map and let
  $E=E_{d\alpha}(v)+|T|$ be the sum of the $d\alpha$ energy of $v$ and
  $T$ be the of the sum of the absolute values of the periods of $v$
  at the punctures. Then, for any non--trivial free
  homotopy-separating simple closed curve $\gamma$, the periods
  $P_{[\gamma]}(da\circ j)\le E$, where $-da\circ j$ is the co-exact
  part of $v^\ast\alpha$, i.e. $v^\ast\alpha+da\circ j\in\H$.
\end{lemma}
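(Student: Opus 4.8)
The plan is to bound $P_{[\gamma]}(da\circ j)$ by estimating $\left|\int_{\sigma_s(\phi)} da\circ j\right|$ along each closed leaf $\sigma_s(\phi)$ of a Strebel differential, uniformly in $s$ and in $\phi\in\Phi(j,[\gamma])$. The key structural fact I would exploit is Equation (\ref{eq:H_L}), which says $d(v^\ast\alpha\circ j)=0$; writing $v^\ast\alpha+da\circ j=\eta\in\H$, the co-exact part $-da\circ j$ is a genuine 1-form whose exterior derivative we can control. First I would set up coordinates adapted to the Strebel cylinder $R=(0,1)\times S^1$, parametrizing the horizontal leaves as $\sigma_s$, and express the period integral $I(s)=\int_{\sigma_s} da\circ j$. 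The natural move is to relate two nearby leaves: for $s_1<s_2$ the difference $I(s_2)-I(s_1)$ is, by Stokes' theorem applied to the subcylinder between them, equal to $\int d(da\circ j)$ over that region.

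The heart of the argument is that $d(da\circ j)=-d(v^\ast\alpha)\circ(\cdots)$ after using $d(v^\ast\alpha\circ j)=0$; more precisely, since $\eta=v^\ast\alpha+da\circ j$ is harmonic hence closed, we get $d(da\circ j)=-d(v^\ast\alpha)=-v^\ast d\alpha$. Thus the variation of the period from leaf to leaf is controlled by the pullback of $d\alpha$, whose integral over any subdomain is bounded by the $d\alpha$-energy $E_{d\alpha}(v)$ via the total-mass estimate furnished by the stable Hamiltonian structure (the constant $M$ and the bound $\left|\int_S v^\ast d\alpha\right|\le M\int_S v^\ast\omega$ from the discussion after Definition \ref{def:energy}). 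Integrating this variation across the full height of the cylinder shows that the spread of $I(s)$ over $s\in(0,1)$ is at most $E_{d\alpha}(v)$. I would then pin down the absolute value of $I$ at one reference leaf: as $s\to 0$ or $s\to 1$ the leaf $\sigma_s$ degenerates into the critical graph and ultimately can be pushed toward a small loop around a puncture, where the puncture period of $v^\ast\alpha$ (and hence of $da\circ j$, since $\eta$ contributes nothing around a contractible loop or a fixed amount around a puncture loop) is accounted for by the term $T$. Combining the $T$ contribution at the reference leaf with the $E_{d\alpha}(v)$ spread gives $|I(s)|\le E_{d\alpha}(v)+T=E$ for every $s$.

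Finally I would observe that the bound is uniform over the choice of $\phi\in\Phi(j,[\gamma])$, because the estimate only used the closedness of $v^\ast\alpha\circ j$, the harmonicity of $\eta$, and homological/energy data intrinsic to $v$ and $[\gamma]$ — none of which depend on which Strebel differential realizes the free homotopy class. Taking the supremum over leaves and over $\phi$ then yields $P_{[\gamma]}(da\circ j)\le E$ directly from the definition of the period in Definition \ref{def:periods}. The restriction to separating $\gamma$ (as the hypothesis names a ``free homotopy-separating'' curve) is what lets me apply Stokes cleanly: a separating curve bounds, so the harmonic form $\eta$ integrates to zero over leaves in $[\gamma]$ in the limit, isolating the co-exact contribution and ensuring the reference-leaf value is governed purely by $T$.

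I expect the main obstacle to be making the degeneration of $\sigma_s$ toward the critical graph and the punctures rigorous enough to extract the reference-leaf bound in terms of $T$: one must track how the closed leaves of the horizontal foliation limit onto $\Gamma$, and argue that the integral of $da\circ j$ over such a limiting leaf is controlled by the puncture periods rather than blowing up. Handling the genus-1 case separately (where $\Gamma$ is replaced by a single regular leaf, as noted in the excerpt) and confirming that Stokes applies on the resulting cylinder without a genuine critical graph will require a short but careful argument.
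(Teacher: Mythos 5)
Your core mechanism — Stokes on the subcylinder between two leaves, using that $\eta=v^\ast\alpha+da\circ j$ is closed so $d(da\circ j)=-v^\ast d\alpha$, hence the leaf integrals $I(s)=\int_{\sigma_s}da\circ j$ have spread controlled by the $d\alpha$--energy — is exactly the paper's first step, with one correction: punctures lying in the region between the two leaves contribute residue terms. Around a puncture $p$ one has $\int_{\del B_p(\varepsilon)}\eta=0$ (the loop is contractible) while $\int_{\del B_p(\varepsilon)}v^\ast\alpha$ tends to the puncture period $T_p$, so $\int_{\del B_p(\varepsilon)}da\circ j\to -T_p$. The correct spread estimate is therefore $|I(s)-I(\tilde s)|\le E_{d\alpha}(v)+|T|$, not $E_{d\alpha}(v)$ alone; this is where $|T|$ enters the paper's proof, not at a reference leaf.

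The genuine gap is your anchoring step. As $s\to 0$ or $s\to 1$ the closed leaves $\sigma_s$ degenerate onto the critical graph $\Gamma$, not onto small loops around punctures: the leaves all lie in the fixed nontrivial free homotopy class $[\gamma]$, so they cannot be ``pushed toward'' puncture loops, and $\lim_{s\to 0}I(s)=\int_\Gamma da\circ j$ is an a priori uncontrolled quantity — nothing bounds it by $T$, and your own closing paragraph correctly flags this as the weak point. The paper closes the argument instead with an exact mean--value computation that needs no reference leaf: in the natural coordinates of the ring domain $R=(0,1)\times S^1$ one has $\int_0^1 I(s)\,ds=\int_R da\circ j\wedge ds=\int_R da\wedge dt=\int_{\del R}a\,dt=0$, because both boundary components of $R$ are the \emph{same} critical graph $\Gamma$ traversed with opposite orientations (with integrals at punctures taken as Cauchy principal values). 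Vanishing average plus the spread bound immediately gives $|I(s)|\le E_{d\alpha}(v)+|T|$ for every $s$ and every $\phi\in\Phi(j,[\gamma])$. Note also that your final remark about the separating hypothesis is off target: the harmonic form $\eta$ never appears in the quantity being bounded, the curves in Definition \ref{def:periods} are non--separating, and what makes the argument close is the cancellation over $\Gamma$ — a statement about the function $a$, not about periods of $\eta$.
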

\begin{proof}
  Let $\sigma_s$ be a foliation with compact leaves given by a
  Strebel differential $\phi$ with ring domain $R$ of height 1 associated
  with $\gamma$, and let $\Gamma$ denote it's singular leaf. So
  $\sigma_s=\{s\}\times S^1\subset R=(0,1)\times S^1$.
  \begin{eqnarray*}
    \int_0^1\left(\int_{\sigma_s}da\circ j\right)ds=\int_Rda\circ j\wedge ds=\int_R
    da\wedge dt=\int_{\del R}a\,dt=\int_{\Gamma}a\,dt-\int_{\Gamma}a\,dt=0.
  \end{eqnarray*}
  Here we interpret integrals on a leaf containing a puncture in the
  sense of Cauchy.  Then for any $s,\tilde s\in(0,1)$
  \begin{eqnarray*}
    \left|\int_{\sigma_s}da\circ j-\int_{\sigma_{\tilde s}}da\circ j\right|\le E_{d\alpha}(v)+|T|
  \end{eqnarray*}
  where $|T|$ is the sum of the absolute value of the periods, so
  \begin{eqnarray}\label{eq:coexact_integral}
    \left|\int_{\sigma_s}da\circ j\right|
    \le \left|\int_R da\circ j\wedge ds\right|+E_{d\alpha}(v)+|T|
    \le E_{d\alpha}(v)+|T|.
  \end{eqnarray}

  Since this is true for any Strebel differential $\phi$ and any leaf
  $\sigma_s$ we conclude that
  \begin{eqnarray*}
    P_{[\gamma]}(da\circ j)=\sup_{\phi\in\Phi([\gamma],j)}\sup_{s\in(0,1)}
    \left|\int_{\sigma_s}da\circ  j\right|\le E.
  \end{eqnarray*}
\end{proof}

We immediately obtain the following results.
\begin{lemma}\label{lem:eta_periods}
  Let $(v_n,j_n)$ be a sequence of $\H$--holomorphic maps with fixed
  asymptotics and uniformly bounded $\omega$--energy.  Then $v_n$ has
  uniformly bounded periods if and only if the periods of
  $\eta_n=v_n^\ast\alpha+da_n\circ j_n\in\H$ are uniformly bounded.
\end{lemma}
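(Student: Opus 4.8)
The plan is to deduce the equivalence directly from Lemma \ref{lem:da_periods} and the elementary subadditivity of the period functional, feeding in the energy bound built into the stable Hamiltonian structure. The starting point is the splitting from Equation (\ref{eq:1-form}), which writes each $v_n^\ast\alpha = \eta_n - da_n\circ j_n$ as its harmonic part $\eta_n\in\H$ minus its co-exact part $da_n\circ j_n$.

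First I would record that $\mu\mapsto P_{[\gamma]}(\mu)$ is subadditive. Since $P_{[\gamma]}(\mu)=\sup_\phi\sup_s\big|\int_{\sigma_s(\phi)}\mu\big|$ is a supremum of absolute values of linear integration functionals, the triangle inequality applied leaf-by-leaf gives $P_{[\gamma]}(\mu_1+\mu_2)\le P_{[\gamma]}(\mu_1)+P_{[\gamma]}(\mu_2)$, and clearly $P_{[\gamma]}(-\mu)=P_{[\gamma]}(\mu)$. Applying this to the splitting in both directions and recalling $P_{[\gamma]}(v_n)=P_{[\gamma]}(v_n^\ast\alpha)$ yields the two--sided estimate
\[
  \big|P_{[\gamma]}(v_n)-P_{[\gamma]}(\eta_n)\big|\le P_{[\gamma]}(da_n\circ j_n)
\]
for every simple closed curve $\gamma$, in particular for each curve $\gamma_i$ in the chosen homology basis, which is non--separating by virtue of being a basis element.

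Next I would bound the right--hand side uniformly in $n$. By Lemma \ref{lem:da_periods}, $P_{[\gamma_i]}(da_n\circ j_n)\le E_{d\alpha}(v_n)+|T_n|$. The hypothesis of fixed asymptotics fixes, and hence uniformly bounds, the puncture periods, so $|T_n|\le C_1$. The stable Hamiltonian structure provides a constant $M$ with $\big|\int_S v^\ast d\alpha\big|\le M\int_S v^\ast\omega$, whence $E_{d\alpha}(v_n)\le M\,E_\omega(v_n)\le C_2$ by the assumed uniform $\omega$--energy bound. Combining, $P_{[\gamma_i]}(da_n\circ j_n)\le C_1+C_2=:C$ independently of $n$ and $i$.

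The conclusion is then immediate: for every basis curve $\big|P_{[\gamma_i]}(v_n)-P_{[\gamma_i]}(\eta_n)\big|\le C$, so the sequence $\{P_{[\gamma_i]}(v_n)\}_n$ is bounded if and only if $\{P_{[\gamma_i]}(\eta_n)\}_n$ is, and since bounded periods means simultaneous boundedness along all basis curves, the two conditions coincide. I do not expect a genuine obstacle here --- the analytic content is carried entirely by Lemma \ref{lem:da_periods} and by the $d\alpha$--versus--$\omega$ energy inequality. The only points requiring a line of care are verifying that Lemma \ref{lem:da_periods} applies to exactly the basis curves of Definition \ref{def:periods}, and that ``fixed asymptotics'' genuinely translates into a uniform bound on the $|T_n|$; both unwind directly from the definitions.
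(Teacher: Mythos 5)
Your proposal is correct and is precisely the argument the paper intends: Lemma \ref{lem:eta_periods} is stated there without proof as an immediate consequence of Lemma \ref{lem:da_periods}, and your write-up simply makes the implicit steps explicit --- subadditivity of $P_{[\gamma]}$ applied to the splitting $v_n^\ast\alpha=\eta_n-da_n\circ j_n$, the bound $E_{d\alpha}(v_n)\le M\,E_\omega(v_n)$ from the stable Hamiltonian structure, and fixed asymptotics controlling $|T_n|$. No discrepancy with the paper's route.
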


\begin{lemma}\label{lem:eta_bounded_rescale}
  Let $(v_n,j_n)$ be a sequence of $\H$--holomorphic maps so that the
  periods of $\eta_n=v_n^\ast\alpha+da_n\circ j_n\in \H$ are uniformly
  bounded. Then, after finitely many rescalings, there is a
  subsequence so that $dv_n$ is uniformly bounded.
\end{lemma}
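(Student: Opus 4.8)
The plan is to run a Sachs--Uhlenbeck rescaling argument adapted to the mixed Cauchy--Riemann/Poisson nature of the $\H$--holomorphic equation, using the period hypothesis to control the harmonic part $\eta_n$ throughout. First I would record that bounded periods pin each $\eta_n$ into a compact subset of the finite--dimensional space $\H(\Sigma,j_n)$. Since the periods in Definition \ref{def:periods} are taken along a full basis of $H_1(\Sigma;\setZ)$, they control both the period around and the flux across each neck, and in the cylindrical metric a harmonic form on a neck $[-R,R]\times S^1$ is essentially $a\,ds+b\,dt$ with these two constants; this yields a uniform bound $\norm{\eta_n}_{C^0}\le C$ measured in the cylindrical metric, valid on both the thick and the thin part. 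Because locally all closed forms are exact, $\H$--holomorphic maps obey the same local elliptic theory as $J$--holomorphic maps, so I may import the $\varepsilon$--regularity (mean value) inequality and the energy quantization statement from \cite{hofer_asymptotics} and \cite{SFT_compactness} for the combined energy $E=E_\omega+E_\alpha$, which is uniformly bounded by the standing hypotheses.

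Second, suppose $\norm{dv_n}_{C^0}$ is not uniformly bounded. I would pick $z_n$ realizing the maximum of $|dv_n|$ and rescale at $z_n$ by $\varepsilon_n=1/|dv_n(z_n)|\to 0$ in the cylindrical metric, producing maps $\tilde v_n$ with $|d\tilde v_n|\le 1=|d\tilde v_n(0)|$. Under this rescaling the pulled--back harmonic form has $C^0$--norm bounded by $C\varepsilon_n\to 0$, so any $C^\infty_{\loc}(\setC)$ limit solves the $\H$--holomorphic equation with vanishing harmonic part; as $\H$ on the plane is trivial, the limit is a genuine finite--energy $J$--holomorphic plane in the symplectization. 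The uniform gradient bound plus elliptic bootstrapping give such a limit $\tilde v_\infty$ along a subsequence, nonconstant since $|d\tilde v_\infty(0)|=1$, which by removable singularities extends and, by quantization, carries at least $\hbar>0$ of combined energy.

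Third, since the total combined energy is uniformly bounded, at most $E_{\max}/\hbar$ concentration points can occur. I would add a marked point at each such point---adjusting $\overline\M_{g,n}$ and the cylindrical metric exactly as described in the metric discussion above---and iterate. The process terminates after finitely many rescalings, after which no concentration remains and $dv_n$ is uniformly bounded on the modified domain along a subsequence, which is the claim.

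The main obstacle I anticipate is \emph{ruling out purely characteristic blow--up}: the gradient could a priori concentrate entirely in the $L$--direction, i.e.\ in $v_n^\ast\alpha=\eta_n-da_n\circ j_n$, with no $\omega$--energy present. This is precisely where the period hypothesis is indispensable. After subtracting the now--bounded $\eta_n$, the coexact part $da_n$ is governed by the Poisson equation $d(da_n\circ j_n)=-v_n^\ast d\alpha$, whose right--hand side is controlled by $E_{d\alpha}\le M\,E_\omega$. A purely $L$--directional bubble would be $\tilde v_\infty=x\circ\phi$ for a characteristic flow line $x$ and a nonconstant harmonic $\phi:\setC\to\setR$ with finite $\alpha$--energy, which forces $\phi$ to have finite Dirichlet energy and hence, by Liouville, to be constant---a contradiction. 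Making this dichotomy quantitative, so that every concentration consumes a definite amount of the combined energy $E_\omega+E_\alpha$ rather than merely draining the a priori unbounded harmonic contribution, is the crux of the argument.
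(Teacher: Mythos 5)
Your proposal is correct and takes essentially the same route as the paper: the paper's proof consists of exactly your two steps, first invoking Lemma \ref{lem:harmonic_bounded} (the uniform, $j$--independent sup--norm bound on a harmonic 1--form in terms of its periods, which is the rigorous version of your neck heuristic and is proved in the appendix via Lemma \ref{lem:eta_on_necks} and a Hodge--theoretic contradiction argument) and then citing ``the standard bubbling off analysis''. Your remaining paragraphs simply spell out what that standard analysis contains---rescaling at gradient maxima, vanishing of the rescaled harmonic part, energy quantization, and the Liouville-type exclusion of purely characteristic blow--up---so they elaborate on, rather than diverge from, the paper's argument.
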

\begin{proof}
  By Lemma \ref{lem:harmonic_bounded} we see that $\eta_n$ is uniformly
  bounded. The result now follows from the standard bubbling off
  analysis.
\end{proof}

\begin{definition}\label{def:twist}
  For an $\H$--holomorphic map $v:C_R\to Z$ from a cylinder
  $C_R=[-R,R]\times S^1$ we define the {\em twist}
  \begin{eqnarray*}
    S=\int_{S^1}\left(\int_{[-R,R]\times \{t\}}v^\ast\alpha\right)\,dt
  \end{eqnarray*}
  and the {\em average twist}
  \begin{eqnarray*}
    \tilde S=\frac{S}{2R}=\frac1{2R}
    \int_{S^1}\left(\int_{[-R,R]\times \{t\}}v^\ast\alpha\right)\,dt.
  \end{eqnarray*}
\end{definition}
The twist and the average twist only depend on $\eta$ and are
independent of $da$, since
\begin{eqnarray}
  \int_{S^1}\left(\int_{[-R,R]\times \{t\}}da\circ j\right)\,dt
  =\int_{[-R,R]\times S^1}a_t\,ds\,dt
 =\int_{[-R,R]\times \{t\}}\left(\int_{S^1}da\right)\,ds=0.
\end{eqnarray}
In particular, the twist of a neck--region is uniformly bounded in
terms of the periods of $\eta$ by Lemma \ref{lem:bounded_twist}, and
the relative twist is bounded in terms of $\norm{\eta}_\infty$.

\begin{theorem}\label{thm:bounded_gradient}
  Let $v_n:(\dot\Sigma,j_n)\to Z$ be a sequence of $\H$--holomorphic
  maps with $j_n\in\M_{g,n}$ and uniformly bounded $E_\alpha$--energy
  and $E_\omega$--energy and uniformly bounded periods.

  Then there exists a constant $C>0$ and a subsequence so that with
  $v_n^\ast\alpha=\eta_n-da_n\circ j_n$
  \begin{eqnarray*}
    \norm{\eta_n}_\infty<C,\qquad
    \norm{da_n}_\infty<C,\qquad\mathrm{and}\quad
    \norm{\pi_F\,dv_n}_\infty<C
  \end{eqnarray*}
  and the twist of all neck--maps is uniformly bounded.
\end{theorem}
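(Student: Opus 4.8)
The plan is to assemble the results already established in this section, reducing the three sup--norm bounds and the twist bound to the single uniform bound $\norm{\eta_n}_\infty<C$ together with the standard bubbling--off analysis, and then to split the gradient using the metric $g=\alpha\otimes\alpha\oplus g_F$.

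First I would pass to a subsequence with fixed asymptotics. The uniformly bounded $E_\omega$-- and $E_\alpha$--energies, together with the Morse/Morse--Bott hypothesis, leave only finitely many possible asymptotic orbits at each puncture, so after passing to a subsequence the asymptotics are fixed and the puncture periods $T$ are uniformly bounded. The stable Hamiltonian inequality $\left|\int_S v^\ast d\alpha\right|\le M\int_S v^\ast\omega$ then bounds $E_{d\alpha}(v_n)$ in terms of the uniform $E_\omega$ bound. Since the periods $P_{[\gamma_i]}(v_n)$ are uniformly bounded by hypothesis, Lemma \ref{lem:eta_periods} yields uniformly bounded periods of $\eta_n$, and Lemma \ref{lem:harmonic_bounded} then gives the first bound $\norm{\eta_n}_\infty<C$.

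Next I would produce the gradient bound. By Lemma \ref{lem:eta_bounded_rescale}, the bound on $\norm{\eta_n}_\infty$ together with the energy bounds yields, after finitely many rescalings (adding marked points as needed in the rescaling convention for the domains) and passing to a further subsequence, a uniform bound $\norm{dv_n}_\infty<C$. This is the one step carrying genuine analytic content: it is the standard bubbling argument, in which gradient blow--up is rescaled to a nonconstant bubble carrying at least a fixed quantum of energy, so that the bounded total energy permits only finitely many bubbles. The feature specific to the $\H$--holomorphic setting is that the characteristic ($L$) direction cannot produce new blow--up: the bound on $\norm{\eta_n}_\infty$ and the Poisson equation satisfied by $a_n$ control the $L$--component, so blow--up can only originate from the $J$--holomorphic $F$--component, to which the usual quantization applies. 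I expect this to be the main obstacle, although it is packaged into Lemma \ref{lem:eta_bounded_rescale}.

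Finally I would split the gradient and read off the remaining bounds. With the splitting $TZ=L\oplus F$ and the metric $g=\alpha\otimes\alpha\oplus g_F$, the $L$--component of $dv_n$ is $(v_n^\ast\alpha)\otimes R$ with $|R|=1$, so pointwise
\[
  |dv_n|^2=|v_n^\ast\alpha|^2+|\pi_F\,dv_n|^2 .
\]
Hence $\norm{dv_n}_\infty<C$ gives both $\norm{\pi_F\,dv_n}_\infty<C$ and $\norm{v_n^\ast\alpha}_\infty<C$. Because $j$ is a pointwise isometry on $1$--forms, $\norm{da_n}_\infty=\norm{da_n\circ j}_\infty$, and from $da_n\circ j=\eta_n-v_n^\ast\alpha$ I obtain $\norm{da_n}_\infty\le\norm{\eta_n}_\infty+\norm{v_n^\ast\alpha}_\infty<2C$, which completes the three sup--norm bounds. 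For the twist, recall that the twist of a neck region depends only on $\eta$ by the computation following Definition \ref{def:twist}; by Lemma \ref{lem:bounded_twist} it is bounded in terms of the periods of $\eta_n$, which are uniformly bounded by the first step, so all neck--map twists are uniformly bounded.
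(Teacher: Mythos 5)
Your proposal is correct and follows essentially the same route as the paper: bounded periods of $\eta_n$ via Lemma \ref{lem:eta_periods}, the gradient bound via the bubbling analysis packaged in Lemma \ref{lem:eta_bounded_rescale}, the sup--norm bound on $\eta_n$ via Lemma \ref{lem:harmonic_bounded}, the remaining sup--norm bounds by splitting $dv_n$ with respect to $g=\alpha\otimes\alpha\oplus g_F$, and the twist bound via Lemma \ref{lem:bounded_twist}. Your explicit verification of the fixed--asymptotics hypothesis of Lemma \ref{lem:eta_periods} (passing to a subsequence using the Morse/Morse--Bott assumption) and the pointwise identity $|dv_n|^2=|v_n^\ast\alpha|^2+|\pi_F\,dv_n|^2$ merely fill in details the paper's terse proof leaves implicit.
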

\begin{proof}
  By Lemma \ref{lem:eta_periods} we see that the periods of $\eta_n$
  are uniformly bounded. Then Lemma \ref{lem:eta_bounded_rescale}
  shows that we can, after finitely many rescalings, choose a
  subsequence so that $dv_n$ is uniformly bounded. By Lemma
  \ref{lem:harmonic_bounded} we see that the sup norm of $\eta_n$ is
  also uniformly bounded, and thus $da_n$ and $\pi_F\,dv_n$ must also
  be uniformly bounded in the sup norm.  The twists of the neck maps
  are uniformly bounded by Lemma \ref{lem:bounded_twist}.
\end{proof}

\subsection{Long Cylinders}
\label{sec:long_cylinders}
To prove the compactness statement we need to understand the behavior
of long $\H$--holomorphic cylinders with small $\omega$--energy and
uniformly bounded derivative, center action, and twist. We reduce the
argument to the $J$--holomorphic case discussed in
\cite{hofer_cylinders}. The main difference between the $J$ and
$\H$--holomorphic settings is that $\H$--holomorphic maps may have
non--zero (uniformly bounded) twist, whereas $J$--holomorphic maps
have vanishing twist.

To reduce the question about $\H$--holomorphic cylinders to
$J$--holomorphic cylinders, let $\phi_t:Z\to Z$ denote the
time--$t$ characteristic flow. The bundle map $d\phi_t:TZ\to
\phi_t^\ast TZ$ is an isomorphism preserving the splitting $TZ=L\oplus
F$. Given an $\H$--holomorphic map $v:C_R\to Z$ with average
twist $\tilde S$ let $\tilde v:C_R\to Z$ be given by
\begin{eqnarray*}
  \tilde v(s,t)=\phi_{-\tilde S\,s} v(s,t)
\end{eqnarray*}
and define the 1--parameter family of almost complex structures
\begin{eqnarray*}
  J_s\in End(F),\qquad
  J_s(z)=(\phi_{\tilde S\,s}^\ast J)(z)
  =d\phi_{-\tilde S\,s}(\phi_{\tilde S\,s}(z))\circ J(\phi_{\tilde
    S\,s}(z))\circ d\phi_{\tilde S\,s}(z).
\end{eqnarray*}
Let $\tilde J=\tilde J(s,t,z):C_R\times Z\to End(F)$ be the
domain dependent almost complex structure defined by $\tilde
J(s,t,z)=J_s(z)$. Then $\tilde v$ is $\tilde J$--holomorphic, that is
$\tilde v^\ast\alpha=v^\ast\alpha-\tilde S\,ds$ is coexact and
\begin{eqnarray*}
  \delbar^F_{\tilde J}\tilde v(s,t)&=&
  \frac12\left\{\pi_F\,d\tilde v(s,t)+J_s(\tilde v(s,t))\circ \pi_F\,d\tilde
  v(s,t)\circ j\right\}\\
  &=&\frac12d\phi_{-\tilde S\,s}(v(s,t))
  \left\{\pi_F\,dv(s,t)+J(v(s,t))\circ \pi_F\,dv(s,t)\circ j\right\}\\
  &=&0.
\end{eqnarray*}

Now suppose the twists $S_n$ of a family of $\H$--holomorphic maps are uniformly
bounded by $|S_n|<C$. Then for each $z\in Z$, the family $J_{s}(z)$, of
almost complex structures varies in the compact set
\begin{eqnarray*}
  \mathcal{J}(z,C)
  =\left\{d\phi_{-\sigma}(\phi_{\sigma}(z))\circ J(\phi_{\sigma}(z))\circ 
  d\phi_{\sigma}(z)\,|\,\sigma\in\left[-C/2,C/2\right]\right\}.
\end{eqnarray*}
independent of how large $R$ is.

We note that all the results in \cite{hofer_cylinders} remain valid
when the fixed almost complex structure in \cite{hofer_cylinders} is replaced
by a domain--dependent almost complex structure varying in a compact
set.

Before we proceed we need the following definition.
\begin{definition}\label{def:period_spectrum}
  The {\em period spectrum} of $(Z,\alpha,\omega)$ is
  \begin{eqnarray*}
    \mathcal{P}=\{0\}\cup\{T> 0|\,\exists\ \mathrm{closed\ characteristic}\ x\,
    \mathrm{of\ period\ }T\}.
  \end{eqnarray*}

  
  For any $E>0$ the {\em period gap} w.r.t $E$ is the largest number
  $\hbar=\hbar(E)$ so that
  \begin{eqnarray}\label{eq:hbar}
    |T-T'|<\hbar\quad\forall\, T,T'\in\mathcal{P}\ \mathrm{with}\ T,T'< E.
  \end{eqnarray}
\end{definition}

\begin{lemma}\label{lem:convergence_neck}
  Let $E_0>0$ so that all closed characteristics of period $T\le E_0$
  are non--degenerate. Let $\hbar$ be the period gap between closed
  characteristics of period $\le E_0$ as in Equation
  (\ref{eq:hbar}). Let $1>\delta>0$ be smaller than the lowest
  eigenvalue of any asymptotic operator governing the transverse
  approach to any closed characteristic of period $T\le E_0$. 

  Fix $\gamma$ satisfying $0<\gamma<\hbar\le E_0$ and $N\in
  \setN$. Then for every $\varepsilon>0$ there exists a constant $h>0$
  so that the following holds.

  For every $R>h$ and every $\H$--holomorphic cylinder
  \begin{eqnarray*}
    v:C_R=[-R,R]\times S^1\to Z
  \end{eqnarray*}
  satisfying $E_\omega(v)\le \gamma$ and gradient and twist bounded by
  $C$ and center action $T=\int_{\{0\}\times S^1}\eta$ satisfies $T\le
    E_0-\gamma$ there exist a characteristic flow line $x$ so that
  \begin{eqnarray*}
    d(v(s,t),x(\tilde S\,s+T\,t))
    &\le& \varepsilon e^{-\delta(R-h)}\cosh(\delta s),\qquad
    \forall (s,t)\in C_{R-h}\\
    |D^\nu(dv(s,t)-\tilde S\,ds-T\,dt)|
    &\le& \varepsilon e^{-\delta(R-h)}\cosh(\delta s),\qquad
    \forall (s,t)\in C_{R-h},\quad \forall \nu,\ |\nu|\le N.
  \end{eqnarray*}
\end{lemma}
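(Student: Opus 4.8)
The plan is to remove the twist by the gauge transformation constructed immediately before the lemma and thereby reduce to the $J$--holomorphic long--cylinder estimates of \cite{hofer_cylinders}. Put $\tilde v(s,t)=\phi_{-\tilde S\,s}\,v(s,t)$, so that $\tilde v$ is $\tilde J$--holomorphic with coexact $\tilde v^\ast\alpha=v^\ast\alpha-\tilde S\,ds$, where $\tilde J(s,t,z)=J_s(z)$. The one point that must be checked with care---and which I expect to be the only real subtlety---is that every constant entering the reduction is independent of $R$. This is exactly what boundedness of the \emph{twist} $S=2R\,\tilde S$ (rather than the average twist $\tilde S$) provides: from $|S|<C$ we get $|\tilde S\,s|\le|\tilde S|\,R<C/2$ for all $s\in[-R,R]$, so the structures $J_s(z)$ remain in the fixed compact set $\mathcal J(z,C)$ and each $\phi_{\tilde S\,s}$ is the characteristic flow run for a time in $[-C/2,C/2]$ only. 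Hence $\phi_{\tilde S\,s}$ is a diffeomorphism with derivatives of all orders bounded uniformly in $R$; since the characteristic flow preserves $\omega$ and its generator lies in $\ker\omega$, one checks $\tilde v^\ast\omega=v^\ast\omega$, whence $E_\omega(\tilde v)=E_\omega(v)\le\gamma$, and the bound $\norm{dv}_\infty<C$ transfers to a uniform gradient bound for $\tilde v$.

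Next I would apply the long--cylinder estimate of \cite{hofer_cylinders}, which, as recorded in the discussion above, survives verbatim when the fixed almost complex structure is replaced by the domain--dependent $\tilde J=J_s$ varying in the compact family $\mathcal J(\cdot,C)$. The action defect $\int_{C_R}\tilde v^\ast d\alpha$ is bounded by $M\,E_\omega(\tilde v)\le M\gamma$ through the stable Hamiltonian inequality, and since $\gamma<\hbar$ lies below the period gap this pins the center action $T$ to a unique element of the period spectrum; the hypothesis $T\le E_0-\gamma$ then places the asymptotic orbit among the non--degenerate ones, for which $\delta$ is smaller than the lowest eigenvalue of the governing asymptotic operator, giving exponential decay at rate at least $\delta$. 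This produces a characteristic flow line $x$ (a closed $T$--periodic orbit when $T\ne0$) so that, for all $R$ exceeding some $h=h(\varepsilon,\gamma,N,C)$ and all $(s,t)\in C_{R-h}$,
\[
  d\big(\tilde v(s,t),\,x(T t)\big)\le\varepsilon'\,e^{-\delta(R-h)}\cosh(\delta s),
\]
together with the companion bounds on $|D^\nu(d\tilde v-T\,dt)|$ for $|\nu|\le N$. The factor $\cosh(\delta s)$ is the two--sided exponential decay propagating inward from both ends of the finite cylinder.

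Finally I would undo the gauge transformation. Because $x$ is a flow line of the characteristic vector field, $\phi_{\tilde S\,s}\big(x(T t)\big)=x(\tilde S\,s+T t)$ is precisely the neck map, and $v(s,t)=\phi_{\tilde S\,s}\,\tilde v(s,t)$. As $\phi_{\tilde S\,s}$ is uniformly bi--Lipschitz for $|\tilde S\,s|\le C/2$, the $C^0$ bound transfers at once:
\[
  d\big(v(s,t),\,x(\tilde S\,s+T t)\big)=d\big(\phi_{\tilde S\,s}\tilde v,\,\phi_{\tilde S\,s}x(T t)\big)\le\kappa\,d\big(\tilde v,\,x(T t)\big).
\]
For the derivative estimates, writing the neck map as $y(s,t)=x(\tilde S\,s+T t)=\phi_{\tilde S\,s}\big(x(T t)\big)$ in the same form as $v$ gives
\[
  dv-dy=d\phi_{\tilde S\,s}\big(d\tilde v-d[x(T t)]\big)+\tilde S\,\big(R(v)-R(y)\big)\,ds,
\]
so the deviation $dv-\tilde S\,ds-T\,dt$, read off in a fixed trivialization along $x$, is controlled by the Hofer bound on $d\tilde v-T\,dt$ together with the $C^0$ estimate and the Lipschitz continuity of $R$; the higher derivatives $D^\nu$ are treated identically using the uniform bounds on the derivatives of $\phi_{\tilde S\,s}$ and of $R$. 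Absorbing the finitely many $R$--independent transfer constants into a choice $\varepsilon'<\varepsilon$ and enlarging $h$ accordingly yields the stated estimates.
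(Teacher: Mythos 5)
Your proposal is correct and takes essentially the same route as the paper: untwist via $\tilde v(s,t)=\phi_{-\tilde S\,s}\,v(s,t)$, note that the $\omega$--energy and gradient bounds are preserved and that the twist bound keeps $\tilde J$ in the compact family $\mathcal{J}(\cdot,C)$, and invoke Theorems 1.2 and 1.3 of \cite{hofer_cylinders}. The paper leaves the transfer of the estimates from $\tilde v$ back to $v$ implicit, whereas you carry it out explicitly (bi--Lipschitz bounds on $\phi_{\tilde S\,s}$ and the identity $\phi_{\tilde S\,s}(x(Tt))=x(\tilde S\,s+Tt)$); this is a worthwhile elaboration of the same argument, not a different one.
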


\begin{proof}
  Consider the $\tilde J=\tilde J(\tilde S)$--holomorphic map
  \begin{eqnarray*}
    \tilde v(s,t)=\phi_{-\tilde S\,s}v(s,t)
  \end{eqnarray*}
  and note that $E_{\omega}(\tilde v)=E_\omega(v)$ and $\tilde v$
  lifts to a finite $\alpha$--energy $\tilde J$--holomorphic map into the
  symplectization. Moreover the $\alpha$ energy is a--priori bounded
  in terms of the center action and the $\omega$--energy.

  To prove the theorem we need to show that there exists $h>0$ so that
  \begin{eqnarray*}
    d(\tilde v(s,t),x(T\,t))
    &\le& \varepsilon e^{-\delta(R-h)}\cosh(\delta s),\qquad
    \forall (s,t)\in C_{R-h}\\
    |D^\nu(d\tilde v(s,t)-T\,dt)|
    &\le& \varepsilon e^{-\delta(R-h)}\cosh(\delta s),\qquad
    \forall (s,t)\in C_{R-h},\quad \forall \nu,\ |\nu|\le N.
  \end{eqnarray*}
  But this follows directly from Theorems 1.2 and 1.3 of
  \cite{hofer_cylinders}.
\end{proof}

\subsection{Proof of Theorem \ref{thm:compactness}}
\label{sec:compactness_results}

Before we proof Theorem \ref{thm:compactness} we observe some
relations among the neck lengths.
\begin{lemma}\label{lem:minimal_twist}
  Let $j_n\to j_0\in \M_{g,n}$ be a sequence of complex structures on
  $\Sigma$ and $\eta_n$ a sequence of harmonic 1--forms with
  converging period integrals. Then the twists $S_i^n$ of $\eta_n$ on
  each neck $N_i$ converge to real numbers $S_i$, and there exists a
  subsequence so that whenever $\sum_{i\in I}[N_i]=0\in
  H_1(\Sigma;\setZ)$ for some index set $I$ there exist non--negative
  real numbers $l_i$ with $\sum_{i\in I}l_i=1$ so that $\sum_{i\in
    I}l_i\cdot S_i=0$.

  In particular, homologically trivial necks have vanishing twist.
\end{lemma}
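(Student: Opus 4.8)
The plan is to relate the twist of $\eta_n$ on a neck, which is a quantity that scales with the (blowing-up) neck modulus, to an honest period of the conjugate harmonic form $\eta_n\circ j_n$, and then to extract the balancing relation by a renormalization as the neck moduli degenerate.

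First I would record the relevant mode decomposition on each neck. Writing the $i$-th neck as $N_i=[-R_i^n,R_i^n]\times S^1$ in its flat cylindrical coordinates $(s,t)$, the restriction $\eta_n|_{N_i}$ is a harmonic $1$-form on a flat cylinder, hence the real part of a holomorphic differential $f(w)\,dw$, $w=s+it$, with Laurent expansion $f=\sum_k a_k^{i,n}e^{2\pi kw}$. The non-constant modes are $t$-oscillatory and drop out after integration over $S^1$, so by Definition \ref{def:twist} the twist is exactly $S_i^n=2R_i^n\,\mathrm{Re}(a_0^{i,n})$, while the period of $\eta_n$ over $\gamma_i=\{s\}\times S^1$ is $-\mathrm{Im}(a_0^{i,n})$, independent of $s$. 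Since $\eta_n$ is harmonic, both $\eta_n$ and $\eta_n\circ j_n$ are closed; a direct computation using $dw\circ j=i\,dw$ shows that the period of $\eta_n\circ j_n$ over $\gamma_i$ equals $-\mathrm{Re}(a_0^{i,n})=-S_i^n/(2R_i^n)$. Boundedness of the twists $S_i^n$ (Lemma \ref{lem:bounded_twist}, using that the period integrals converge and are therefore bounded) then lets me pass to a subsequence along which $S_i^n\to S_i$ for every neck.

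The crux is the exact relation valid for each fixed $n$. If $\sum_{i\in I}[N_i]=0\in H_1(\Sigma;\setZ)$, then the $1$-cycle $\sum_{i\in I}\gamma_i$ is null-homologous, so by Stokes and closedness of $\eta_n\circ j_n$,
\begin{eqnarray*}
  0=\int_{\sum_{i\in I}\gamma_i}\eta_n\circ j_n
  =\sum_{i\in I}\left(-\frac{S_i^n}{2R_i^n}\right);
\end{eqnarray*}
equivalently $\sum_{i\in I}S_i^n/R_i^n=0$. This is the scale-sensitive form of the balancing condition, holding identically in $n$ before any limit is taken. Since $j_n$ degenerates, the neck moduli blow up, $R_i^n\to\infty$, and I renormalize by setting
\begin{eqnarray*}
  l_i^n=\frac{1/R_i^n}{\sum_{j\in I}1/R_j^n}\in[0,1],\qquad \sum_{i\in I}l_i^n=1,
\end{eqnarray*}
so that the relation above reads $\sum_{i\in I}l_i^n S_i^n=0$. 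Passing to a further subsequence with $l_i^n\to l_i$ (by compactness of the simplex, and simultaneously for the finitely many admissible index sets $I$) and using $S_i^n\to S_i$ yields $\sum_{i\in I}l_i S_i=0$ with $l_i\ge0$ and $\sum_{i\in I}l_i=1$, which is precisely the minimal twist condition. The final assertion is the special case $I=\{i\}$: a homologically trivial neck forces $l_i=1$, hence $S_i=0$.

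The main obstacle is the identification of the twist density $S_i^n/(2R_i^n)$ with a genuine period of the conjugate harmonic form $\eta_n\circ j_n$; once this is established, the homological relation is automatic and only one subtlety remains, namely that the necks may pinch at different rates. This is exactly what the renormalized weights $l_i^n$ are designed to absorb, since each individual term $S_i^n/R_i^n$ tends to zero while their relative rates survive in the limiting convex combination.
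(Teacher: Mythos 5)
Your proposal is correct and follows essentially the same route as the paper's proof: both identify the average twist $S_i^n/(2R_i^n)$ with the period of the closed form $\eta_n\circ j_n$ over the center circle of the neck (you via the Laurent coefficient $a_0$, the paper by taking $\tilde S_i^n=\int_{N_i^0}\eta_n\circ j$ directly), deduce $\sum_{i\in I}S_i^n/R_i^n=0$ from null-homology of the sum of center loops, and conclude with exactly the same renormalized weights $l_i^n=(1/R_i^n)\big/\sum_{j\in I}1/R_j^n$ together with compactness of the simplex and finiteness of the admissible index sets $I$. Your explicit Fourier-mode computation simply fills in the twist--period identification that the paper treats as immediate from Definition \ref{def:twist}, and your use of Lemma \ref{lem:bounded_twist} to extract convergent twists is, if anything, slightly more careful than the paper's ``exists by assumption.''
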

\begin{proof}
  Consider $\eta_n$ on the necks $N_i=[-R_i^n,R_i^n]\times S^1$.  With
  $N_i^0=\{0\}\times S^1$ the center loops of each neck. The periods
  of $\eta_n\circ j$ on $N_i^0$ satisfy
  \begin{eqnarray*}
    \tilde S_i^n=\int_{N_i^0} \eta_n\circ j.
  \end{eqnarray*}
  Let $I$ be an index set so that $\sum_{i\in I} [N_i]=0\in
  H_1(\Sigma;\setZ)$ and define, for $j\in I$
  \begin{eqnarray*}
    l_j^n=\frac{R_I^n}{R_j^n}\in (0,1],\qquad R_I^n=\left(\sum_{i\in I}\frac{1}{R_i^n}\right).
  \end{eqnarray*}
  The twist $S_i^n$ of $\eta_n$ on the neck $=N_i$ satisfy,
  \begin{eqnarray*}
    \sum_{i\in I}l_i^n \cdot S_i^n
    =2R_I^n\sum_{j\in I}\tilde S_j^n
    =0.
  \end{eqnarray*}
  Set $S_i=\lim_{n\to \infty}S_i^n$, which exists by
  assumption, and choose a subsequence so that
  $l_i=\lim_{n\to\infty}l_i^n\in[0,1]$ exists. For each $n$ we have
  \begin{eqnarray*}
    \sum_{i\in I}l_i^n=\sum_{i\in I}\frac{R_I^n}{R_j^n}=1
  \end{eqnarray*}
  so
  \begin{eqnarray*}
    \sum_{i\in I}l_i=1,\qquad\mathrm{and}\ \sum_{i\in I} l_i\cdot S_i=0.
  \end{eqnarray*}

  Since there are only finitely many index sets $I$ so
  that $\sum_{i\in I} [N_i]=0$ there exists a subsequence so that the
  Lemma holds true.
\end{proof}

\begin{proof}[Proof of Theorem \ref{thm:compactness}]
  Let $v_n:\dot\Sigma\to Z$ be a sequence of smooth $\H$--holomorphic
  maps with bounded $\omega$ and $\alpha$--energies and periods
  bounded by $C$. We need to show that there exists a subsequence that
  converges to a neck--nodal $\H$--holomorphic map.

  By Theorem \ref{thm:bounded_gradient} we may pass to a subsequence
  so that $j_n\to j_0$ and $|dv_n|$, $|\eta_n|$ and the
  relative twists of the neck maps are uniformly bounded. By elliptic
  regularity and Arzela--Ascoli we extract a convergent (in
  $C^\infty$) on the thick part of $(\Sigma,j_0)$. 

  By Lemma \ref{lem:minimal_twist} we may extract a subsequence so
  that the twists of $v_n$ on the necks are bounded, convergent, and
  the twists are minimal.

  By our assumption we have that the center action of each neck is
  bounded by some constant $E\le E_0$, so by Lemma
  \ref{lem:convergence_neck} we see that there exists constants
  $C,h,\delta>0$ and a characteristic flow line $x$ so that
  $d(v_n(s,t)-x(s,t))<Ce^{-\delta(R_n-h)}\cosh(\delta s)$ and
  $|dv_n(s,t)-(\tilde S\,ds+T\,dt)\otimes R|<Ce^{-\delta(R_n-h)}\cosh(\delta s)$ for
  $(s,t)\in C_{R_n-h}=[-R_n+h,R_n-h]\times S^1$. Since the gradient of
  $v_n$ is uniformly bounded on $C_{R_n}$ we may assume, by adjusting
  the constant $C$ in the above formulas, that $h=0$.

  Set $\mu=\delta/2$.  We may split a neck region
  $C_{R_n}=[-R_n,R_n]\times S^1$ into regions
  \begin{eqnarray*}
    A_n^-&=&[-R_n,-R_n+\ln(R_n)]\times S^1\\
    B_N&=&[-R_n+\ln(R_n),R_n-\ln(R_n)]\times S^1\\
    A^+_n&=&   [R_n-\ln(R_n),R_n]\times S^1.
  \end{eqnarray*}
  We similarly split up the cylinder $C_1=[-1,1]\times S^1$ into
  regions
  \begin{eqnarray*}
    \tilde A^+=[-1,-\frac12]\times S^1,\quad
    N=[-1/2,1/2]\times S^1,\quad
    \tilde A^+=[\frac12,1]\times S^1
  \end{eqnarray*}
  and define the piecewise diffeomorphism $\phi_n:C_1\to C_{R_n}$ via
  the diffeomorphisms
  \begin{eqnarray*}
    \phi^-_n&:&\tilde A^-\to A_n^-,\qquad
    \phi_n^-(r,t)=(-R_n-\mu^{-1}\ln[1-2(1-R_n^{-\mu})(1+r)],t)\\
    \phi_n^N&:&N\to B_n,\qquad
    \phi_n^N(r,t)=(2(R_n-\ln(R_n))r,t)\\
    \phi^+_n&:&\tilde A^+\to A_n^+,\qquad
    \phi_n^+(r,t)=(R_n+\mu^{-1}\ln[1-2(1-R_n^{-\mu})(1-r)],t).
  \end{eqnarray*}

  Define the map $\tilde v_n=v_n\circ \phi_n:C_1\to Z$ and consider
  the restrictions to the subdomains $\tilde v_n^\pm=\tilde
  v_n\big|_{\tilde A^\pm}$ and $\tilde v_n^N=\tilde v_n\big|_{N}$.
  Then, with $(r,t)$ coordinates on $\tilde A$ and remembering that
  $\delta/\mu=2$
  \begin{eqnarray*}
    |(d\tilde v_n^\pm(r,t)-T\,dt)(\del_t)|
    &\le& C e^{-\delta R_n}
    \cosh(\pm\delta R_n\pm 2\ln[1-2(1-R_n^{-\mu})(1\mp r)])\\
    &\le& C[ 1-2(1-R_n^{-\mu})(1\mp r)]^2
  \end{eqnarray*}
  which is uniformly bounded by $C$ and converges to zero as $r\to \pm\frac12$
  and $n\to \infty$. Similarly
  \begin{eqnarray*}
    |(dv_n^\pm(r,t))(\del_r)|&=&|dv_n\circ d\phi_n^\pm(r,t)(\del_r)|\\
    &\le&C[ 1-2(1-R_n^{-\mu})(1\mp r)]^2\,
    \frac2\mu  \frac{1-R_n^{-\mu}}{1-2(1-R_n^{-\mu})(1\mp r)}\\
    &=&\frac{2C}{\mu}[1-2(1-R_n^{-\mu})(1\mp r)]
  \end{eqnarray*}
  which is also uniformly bounded and converges to zero for
  $r\to\pm\frac12$ and $n\to\infty$.  So there exists a
  reparametrization and a subsequence so that $v$ converges to an
  $\H$--holomorphic map from the smooth part $\Sigma_0$ of the
  neck--nodal domain $(\hat\Sigma,j_0)$. Note that since the average twist on each neck
  converges to zero uniformly we have that $\int_{\{s\}times
    S^1}v^\ast \alpha\circ j\to 0$ uniformly, so in the limit
  condition \ref{eq:H_per} of Definition \ref{def:H_hol} is also
  satisfied along the necks which are now punctures for $\Sigma_0$.

  Similarly, we compute
  \begin{eqnarray*}
    |d\tilde v_n^N(r,t)-(T\,dt+S\,ds)(\del_t)|&\le& \frac{C}{R_n}\\
    |d\tilde v_n^N(r,t)-(T\,dt+S\,ds)(\del_r)|
    &\le& C\frac{R_n}{2(R_n-\ln(R_n))}+|S-2(R_n-\ln(R_n))\tilde S|\\
    &\le&\frac{C}2\left(1+\frac{\ln(R_n)}{R_n-\ln(R_n)}\right)
    +|S|\,\left|1-\frac{R_n-\ln(R_n)}{R_n}\right|\\
    &\le&\left(\frac{C}2+|S|\right)\frac{\ln(R_n)}{R_n}
  \end{eqnarray*}
  which converges to zero uniformly, so a subsequence of $\tilde
  v_n^N$ converges uniformly to a neck map.
  
  Using the diffeomorphisms $\phi_n$ we reinterpret our sequence of
  maps as maps from a fixed reference surface by gluing the thick part
  to the cylinders $C_1$. After passing to a subsequence the resulting
  domains with their induced complex structure converge to a
  neck--nodal domain, and the resulting maps converge uniformly in
  $C^0$ to a minimal twist neck--nodal $\H$--holomorphic map $v_0$. 

  Standard arguments show that the canonical lifts to the
  symplectization also converge, so $v_0$ is exact.
\end{proof}
In fact, it is not hard to extend these results to make the
convergence piecewise smooth, so that the convergence is $C^\infty$ on
the neck regions $\N$ and the smooth part $\Sigma_0$ of the domain.

\section{$S^1$--Invariant Stable Hamiltonian Manifolds}
\label{sec:circle_invariant}
In this section we consider circle--invariant stable Hamiltonian
manifolds of any (odd) dimension. We give topological conditions under
which the periods of families of $\H$--holomorphic maps are always
uniformly bounded, and thus obtain compact moduli spaces of
maps. $\H$--holomorphic maps into circle--invariant manifolds are
needed for applications to folded holomorphic maps
\cite{folded_holomorphic}. They also allow for the explicit
construction of examples highlighting the features of the compactness
theorem, which we give at the end of this section, as well as the
counterexample to a general compactness theorem which is constructed
in Section \ref{sec:non--compactness}.

\begin{definition}\label{def:circle_invariant}
  An stable Hamiltonian manifold $(Z,\alpha,\omega,J)$ is called
  $S^1$--invariant if the characteristic flow defines a free
  $S^1$--action that preserves $J$.
\end{definition} 

Any circle--invariant manifold is an $S^1$--bundle over a symplectic
manifold $(V,\omega_V)$ with projection $\pi_V:Z\to V$ so that
$\omega=\pi_V^\ast\omega_V$. The almost complex structure $J$ descends
to an $\omega_V$--compatible almost complex structure $J_V$ on $V$, so
$J=\pi_V|_F^\ast J_V$. For simplicity we will assume that
$d\alpha=C\cdot\omega$, where $C=\langle c_1(Z),[V]\rangle/vol(V)$,
where $c_1(Z)$ is the first Chern class of the bundle and $vol(V)$ is
the volume with respect to $\omega_V$. We can always arrange for
$\omega$ to be of this form. For details see Section 1 in
\cite{folded_holomorphic}.

There is a natural action of $Map(\hat\Sigma,S^1)$ on the space of
smooth maps into an $S^1$--invariant stable Hamiltonian manifold $Z$
given by
\begin{eqnarray*} 
  Map(\hat\Sigma,S^1)\times Map(\hat\Sigma,Z)\to
  Map(\hat\Sigma,Z) \qquad (f,v)\mapsto f\ast v
\end{eqnarray*} where
\begin{eqnarray*} 
  (f\ast v)(z)=\phi_{f(z)}(v(z))
\end{eqnarray*} 
and $\phi_t$ is the time--$t$ flow of the characteristic vector field
giving the circle action on $Z$. Since the circle action preserves
$J$, the action of of $Map(\hat\Sigma,S^1)$ leaves Equation
(\ref{eq:H_F}) invariant. Moreover, $(f\ast
v)^\ast\alpha=df+v^\ast\alpha$, so Equation (\ref{eq:H_L}) is
invariant under the action by $f\in Map(\hat\Sigma,S^1)$ if and only
if $f$ is harmonic on $\Sigma$. Let $H\subset Map(\hat\Sigma,S^1)$
denote the space of harmonic circle--valued functions on $\Sigma$. $H$
carries a circle action by adding a constant, and
$H/S^1=H^1(\Sigma;\setZ)$. This space is not compact and the counter
example Theorem \ref{thm:non-compactness} builds on this. The
intuition behind the topological conditions in Theorem
\ref{thm:compactness_S^1} is that they allow us to conclude that the
action of $H/S^1$ is free on the relative homotopy classes of
$\H$--holomorphic maps.

\begin{theorem}\label{thm:compactness_S^1}
  Let $u_n$ be a sequence of $\H$--holomorphic maps into an
  $S^1$--invariant almost contact manifold asymptotic to the same
  closed characteristics at the punctures and in the same relative
  homotopy class. Let $\pi:Z\to V$ be the projection to the
  base of $Z$. Assume that  one of the following holds:
  \begin{enumerate}[(i)]
  	\item\label{it:pi2} $\pi_2(V)$ is trivial.
  	\item\label{it:torus} $V=S^2$ and $\dot\Sigma$ is the
          once--punctured torus and the image of no $u_n$ 
          intersects the limit cycle and they are homotopic through
          maps that do not intersect the limit cycle.
  	\item\label{it:trivial} The bundle $Z$ is trivial.
  \end{enumerate}
  Then $u_n$ has a subsequence converging to a neck--nodal map.
\end{theorem}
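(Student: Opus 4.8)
The plan is to reduce everything to the period bound required by Theorem \ref{thm:compactness} and to extract that bound from the topological hypotheses by exploiting the $Map(\hat\Sigma,S^1)$--action. By Theorem \ref{thm:compactness} it suffices to show that the periods $P_{[\gamma_i]}(u_n)$ stay uniformly bounded on a basis of $H_1(\Sigma;\setZ)$, and by Lemma \ref{lem:eta_periods} this is equivalent to bounding the harmonic period classes $[\eta_n]\in H^1(\Sigma;\setR)$, where $\eta_n=u_n^\ast\alpha+da_n\circ j_n$. Recall that $g\in H$ acts by $(g\ast u)^\ast\alpha=dg+u^\ast\alpha$, so acting by a harmonic circle--valued function shifts the harmonic class by $[dg]\in H^1(\Sigma;\setZ)$; thus $H/S^1=H^1(\Sigma;\setZ)$ translates $[\eta_n]$ by integral classes. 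The whole argument rests on showing that, under each of (i)--(iii), this integral translation acts \emph{freely} on relative homotopy classes of $\H$--holomorphic maps, so that the integer part of $[\eta_n]$ cannot drift to infinity inside the single homotopy class to which all $u_n$ belong.

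First I would project. Because $\omega=\pi_V^\ast\omega_V$ and $J=\pi_V|_F^\ast J_V$, the maps $w_n=\pi_V\circ u_n\colon\dot\Sigma\to V$ are $J_V$--holomorphic with $E_\omega(w_n)=E_\omega(u_n)$ uniformly bounded, so ordinary Gromov compactness in $V$ yields a subsequence along which $w_n$ converges to a (possibly nodal) $J_V$--holomorphic limit, and all $w_n$ lie in one homotopy class in $V$. Each $u_n$ is a section of $w_n^\ast Z$, and any two $\H$--holomorphic lifts of a fixed $w$ differ by the action of some $g\in H$; modulo the $S^1$--constant this difference is recorded by $[dg]\in H^1(\Sigma;\setZ)$. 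I would then split $[\eta_n]$ into a purely integral \emph{winding class}, pinned by the relative homotopy type of the lift once freeness is known, plus a harmonic correction term that depends continuously on $w_n$ and is therefore bounded along the Gromov--convergent subsequence. Boundedness of $[\eta_n]$, and hence of the periods, follows.

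The topological input enters only through freeness of the $H^1(\Sigma;\setZ)$--action on relative homotopy classes, which I would verify case by case. In case (\ref{it:trivial}) the bundle is trivial, $Z\cong V\times S^1$, the fibre degree of a map rel boundary is a homotopy invariant, and translation in the $S^1$--factor changes it, so the action is manifestly free. In case (\ref{it:pi2}), the long exact homotopy sequence of $S^1\to Z\to V$ with $\pi_2(V)=0$ gives an injection $\setZ=\pi_1(S^1)\hookrightarrow\pi_1(Z)$; this injectivity means that distinct integral fibre classes produce genuinely distinct relative homotopy classes of lifts, so again the action is free. Case (\ref{it:torus}) is the delicate one: here $V=S^2$ has $\pi_2(V)=\setZ\neq0$, so a sphere could in principle absorb fibre winding, and one must use that $\dot\Sigma$ is the once--punctured torus and that the $u_n$ neither meet the limit cycle nor are homotoped across it; working in the complement of the limit cycle removes the offending generator of $\pi_2(S^2)$ and restores freeness.

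The main obstacle is the interaction between freeness and the degeneration of the projected curves. Freeness is a statement about honest smooth maps, whereas along a Gromov--degenerating sequence $w_n$ the domain develops vanishing cycles, and it is precisely along such a cycle that the integral winding class could escape to infinity while the relative homotopy type of $u_n$ is preserved by the opening--up of a neck. This is exactly the mechanism realised in the counterexample of Theorem \ref{thm:non-compactness}, where $V=S^2$ carries its nontrivial Hopf bundle and a two--handled domain allows the winding to run off. Hypotheses (i)--(iii) are tailored to forbid this escape, and the heart of the proof is to show that in each case the winding along every emerging vanishing cycle is pinned by the fixed homotopy and projection data; verifying this for the borderline case (\ref{it:torus}) is the hardest step.
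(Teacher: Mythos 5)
Your opening moves match the paper exactly: reduce to uniformly bounded periods via Theorem \ref{thm:compactness}, project to $V$ and invoke Gromov compactness for the $J_V$--holomorphic maps $\pi\circ u_n$, and frame the topological hypotheses as freeness of the $H/S^1$--action on relative homotopy classes (this is literally the intuition the paper states before the theorem). But there is a genuine gap at the heart of your plan: freeness of the action on homotopy classes cannot by itself bound the periods, because $P_n=\int_\gamma u_n^\ast\alpha$ is \emph{not} a homotopy invariant --- since $d\alpha=c_v\,\pi^\ast\omega_V\ne 0$ in general, two maps in the same relative homotopy class have periods differing by the flux $\int_{[0,1]\times\gamma}H^\ast d\alpha$ of a connecting homotopy $H$. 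The missing bridge is precisely what the paper's proof supplies: the identity $P_m-P_n=c_v\int_{[0,1]\times\gamma}(\pi\circ H)^\ast\omega_V$, the closing up of $\pi\circ H$ into a \emph{loop} of maps $K=(\pi\circ H)\ast G$ in $V$ using the small geodesic homotopy $G$ furnished by Gromov convergence (contributing a bounded error), and then the case--by--case vanishing of the loop's flux: in (i) via the fibration argument giving $\pi_1\bigl(Map((\hat\Sigma,\del\hat\Sigma),(V,\{p_i\}))\bigr)\cong\bigoplus\pi_2(V)=0$, so $K$ is contractible; in (ii) via $\omega_V=c\cdot PD[pt]$ and the intersection count $\#(\gamma,d[p])=0$ after arranging the puncture on the boundary of the Strebel cylinder; in (iii) via $c_v=0$. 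Your proposal contains none of this quantitative mechanism, and your final paragraph explicitly defers it (``the heart of the proof is to show that... the winding along every emerging vanishing cycle is pinned''), i.e.\ it names the theorem's actual content as an unverified step.

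The auxiliary constructions you lean on are also unbuilt. The splitting of $[\eta_n]$ into an ``integral winding class'' plus a ``harmonic correction depending continuously on $w_n$'' presupposes a family of reference lifts of the $w_n$ with uniformly bounded periods; no such canonical lift exists, and producing one with period control as $j_n$ degenerates is essentially equivalent to the statement being proved (the periods are defined via Strebel differentials for the degenerating $j_n$, so ``depends continuously, hence bounded'' does not follow from $C^0$ Gromov convergence alone). Moreover, your case (i) argument proves the wrong statement: injectivity of $\pi_1(S^1)\to\pi_1(Z)$ does follow from $\pi_2(V)=0$ and plausibly gives freeness on homotopy classes of single maps, but what the flux argument consumes is a statement about \emph{loops of maps rel punctures} (that their homotopy classes are $\bigoplus\pi_2(V)$), since the ambiguity in the flux lives in such a loop, not in a single map. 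A minor factual slip: the counterexample of Theorem \ref{thm:non-compactness} is built on the twice--punctured torus, not a ``two--handled domain.''
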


\begin{proof}
  By Theorem \ref{thm:compactness} it suffices to show that the period
  integrals $P_{[\gamma]}(u_n)$ are uniformly bounded for any
  non--separating simple closed curve $\gamma$. Let $\phi_n$ be any
  sequence of Strebel differentials associated with the free homotopy
  class $[\gamma]$ and $(\Sigma,j_n)$, normalized to have sup norm 1
  (in the cylindrical metric). Choose a subsequence so that $\phi_n$
  converge with all derivatives to $\phi_0$. In particular the
  associated horizontal foliations converge. W.l.o.g. assume by
  possibly choosing another subsequence that $\gamma$ is a closed leaf
  of $\phi_1$ and that each foliation associated with $\phi_n$ has a
  closed leaf $\gamma_n$ that is close to $\gamma$ in the sense that
  \begin{eqnarray*}
    \left|\int_{\gamma}u_n^\ast\alpha-\int_{\gamma_n}u_n^\ast\alpha\right|<1.
  \end{eqnarray*}
  
  Thus it suffices to prove that
  \begin{eqnarray*}
    P_n=\int_{\gamma}u_n^\ast\alpha
  \end{eqnarray*}
  is uniformly bounded. We prove this by contradiction. Assume this
  was not the case, so there exists a subsequence that $|P_n|>n$.
  
  Let $\tilde u_n=\pi\circ u_n:\Sigma\to V$ denote the
  projection of the maps into $V$, which extend naturally over the
  punctures. These maps are $J_V$--holomorphic, and by the usual
  Gromov compactness we may choose a further subsequence so that
  $\tilde u_n$ converge to a map $\tilde u_0:\Sigma\to V$. For $N$
  large enough and any $n,m\ge N$ there exists a unique vector field
  $\xi_{n,m}\in \tilde u_n^\ast TV$ so that
  \begin{eqnarray*}
    \tilde u_m(z)=\exp_{\tilde u_n(z)}(\xi_{n,m}(z))
  \end{eqnarray*}
  satisfying $\norm{\xi_{n,m}}_\infty\to 0$ and $\int_\Sigma
  \norm{\xi_{n,m}}^2\to 0$ as $n,m\to 0$.
  
  Let $H_{n,m}:[0,1]\to Z$ be a relative homotopy between
  $u_n$ and $u_m$ and consider the ``flux'' given by the difference in
  period integrals
  \begin{eqnarray*}
    \F(v_m,v_n)(\gamma)=P_m-P_n=\int_{\gamma}u_m^\ast\alpha-u_n^\ast\alpha
    =\int_{[0,1]\times \gamma}H^\ast d\alpha
    =\int_{[0,1]\times \gamma}(\pi\circ H)^\ast c_v\omega_V
  \end{eqnarray*}
  and let $G_{n,m}:[0,1]\times \dot\Sigma\to V$ be the homotopy,
  relative to the punctures, between $\tilde u_n$ and $\tilde u_m$
  given by
  \begin{eqnarray*}
    G_{n,m}(t,z)=\exp_{\tilde u_m(z)}(t\xi_{m,n}(z)).
  \end{eqnarray*}
  Then the composition $K=(\pi\circ H)\ast G$ is a homotopy between
  $\tilde u_n$ and $\tilde u_n$, and we choose another subsequence
  by dropping finitely many terms so that $\left|\int_{\Sigma}G^\ast
    \omega_V\right|<1$, and thus
  \begin{eqnarray}\label{eq:homotopy_periods}
    |P_n|\le|P_1|+|P_n-P_1|
    \le |P_1|+1+\left|\int_{S^1\times \gamma}K^\ast c_v\omega_V\right|.
  \end{eqnarray}
  We finish the proof by showing that the integral in Equation
  (\ref{eq:homotopy_periods}) vanishes and thus the periods are
  uniformly bounded.

  In the case (\ref{it:pi2}) consider
  the fibration
  \begin{eqnarray*}
    X=Map\left((\hat\Sigma,\del\hat\Sigma),(V,\{p_i\})\right) \to
    Map(\hat\Sigma,V)\to Map(\del\hat\Sigma,\{p_i\}).
  \end{eqnarray*} 
  This gives rise to the long exact sequence in homotopy
  \begin{eqnarray*}
    0\to\pi_1(X)\to\pi_1(Map(\hat\Sigma,V))=\pi_1(Map(\bigvee
    S^1,V))=\bigoplus\pi_2(V)\to 0.
  \end{eqnarray*} 
  Thus $\pi_1(X)=\bigoplus \pi_2(V)$ so in the case (\ref{it:pi2}) the
  loop of maps $K$ is contractible and the integral in Equation
  (\ref{eq:homotopy_periods}) vanishes.

  In the case (\ref{it:torus}) note that $\omega_V=c\cdot PD[pt]$. For
  the once--punctured torus we may choose the Strebel cylinders so
  that the puncture is on the boundary of the cylinder. To compute the
  integral in Equation (\ref{eq:homotopy_periods}) we take the curve
  $\gamma$ to be the central curve in the Strebel cylinder. For a
  family of degree $d$ maps mapping the puncture $p$ to the closed
  characteristic over $\infty\subset S^2$ and not intersecting the
  limit cycle so that $u^{-1}(\infty)=p$ we have
  \begin{eqnarray*}
    K^\ast \omega_V=c\,K^\ast PD[pt]=c
    PD[u^{-1}(\infty)]=c\,PD[S^1\times d\cdot \{p\}]
  \end{eqnarray*}
  and thus
  \begin{eqnarray*}
    \int_{S^1\times \gamma_i}K^\ast c_v\omega_V
    =c\,c_v\,\#(S^1\times\gamma_i,S^1\times d[p])
    =c\,c_v\,\#(\gamma,d[p])=0
  \end{eqnarray*}
  since the curve $\gamma$ does not intersect $p$ by construction.

  In the case (\ref{it:trivial}) the integral vanishes since
  $c_v=0$.
\end{proof}
The obvious case absent from the theorem is when $V=S^2$ and we
consider more general maps than case (\ref{it:torus}). This will be
addressed in Section \ref{sec:circle_invariant} and it turns out that in
general the space of maps is not compact in this case. The key to
proving compactness of the space of maps was to show that the ``flux''
integrals are uniformly bounded. This can either be achieved by
topological restrictions on the target as in cases (\ref{it:pi2}) or
(\ref{it:trivial}) or by assumptions on the space of domains as in
(\ref{it:torus}).

At this point it is convenient to give some explicit examples to
illuminate aspects of the compactness theorem. 

First we give an example of a sequence of maps converging to a
neck--nodal map where the neck converges to a characteristic flow line.
\begin{example}\label{ex:string}
  Consider $Z=S^1\times S^2$, the trivial bundle with an
  $S^1$--invariant structure. Maps $v:\dot\Sigma\to Z$ are
  $\H$--holomorphic if and only if the projection $\tilde v=\pi_{S^2}
  v$ is $J$--holomorphic and the projection $\theta=\pi_{S^1}v$ is
  harmonic. Let $(\tilde v_t,j_t):T^2\to S^2$ be a family of
  $J$--holomorphic maps so that the domains converge to a
  once--pinched torus, pinched along a simple closed curve in the
  homology class $A\in H_1(T^2;\setZ)$ with a dual class $B$. Then
  $(v_t,j_t):S^2\to Z$ $v_t=(\theta=0,\tilde v_t)$ is
  $\H$--holomorphic (and in fact also $J$--holomorphic). Mark a point
  $p$ in $T^2$ away from the node.

  Consider the family of $j_t$ harmonic circle--valued functions
  $\theta_t:T^2\to S^1$ with $\theta_t(p)=0$ and periods 0 along $A$
  and 1 along $B$. Set $u_t=\theta_t\ast v_t=(\theta_t,\tilde
  v_t)$. Then $u_t$ is $\H$--holomorphic and the image neck domain
  converges to a characteristic flow line of length 1 over the image
  of the node $q\in S^2$. More precisely, the neck map converges to a
  map $x:[-1/2,1/2]\times S^1\to Z$, $x(s,t)=(s,q)$.
\end{example}
\begin{figure}[htbp]
  \centering
  \includegraphics[width=9cm]{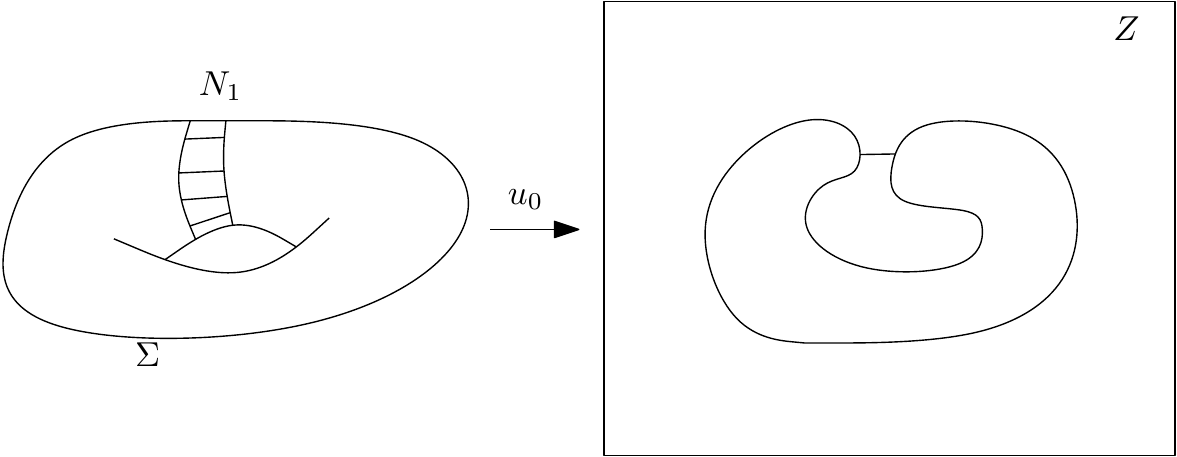}
  \caption{Map with twist at node.}
  \label{fig:string}
\end{figure}

Next we show that ``breaking of trajectories'' can happen in a compact
subset in the symplectization. 
\begin{example}\label{eq:broken_trajectory}
  With the same notation as in the previous example, consider a family
  of $j_t$ harmonic circle--valued functions $\theta_t:T^2\to S^1$
  with $\theta_t(p)=0$ and periods 1 along $A$ and 0 along $B$. Set
  $u_t=\theta_t\ast v_t=(\theta_t,\tilde v_t)$. Then $u_t$ is
  $\H$--holomorphic and the neck domain converges to a cylinder over a
  closed characteristic over the image of the node $q\in S^2$. More
  precisely, the neck map converges to a map $x:[-1/2,1/2]\times
  S^1\to Z$, $x(s,t)=x_q(t)=(t,q)$, where $x_q$ is the parametrized
  closed characteristic over $q$. The lift to the symplectization sits
  in a constant $\setR$--slice.
\end{example}
\begin{figure}[htbp]
  \centering
  \includegraphics[width=9cm]{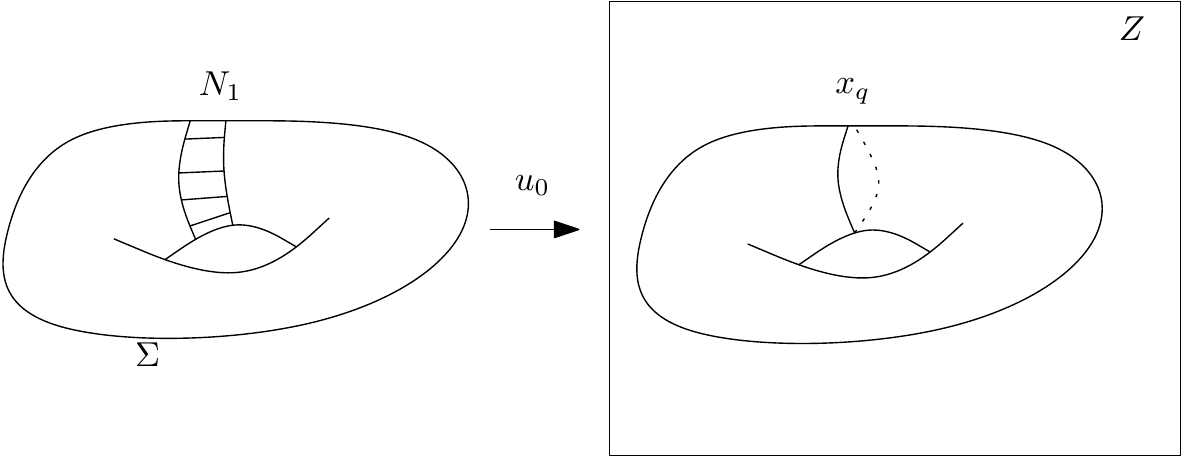}
  \caption{Map broken at closed characteristic in compact subset of symplectization.}
  \label{fig:broken_trajectory}
\end{figure}

To see how the minimality of the twist comes into the compactness
statement, consider the following example.
\begin{example}\label{eq:minimal_twist}
  Let $\tilde v:T^2\to S^2$ be a $J$--holomorphic map with two homologous
  neck regions $N_1$ and $N_2$ that is pinched along the two
  necks and lift this to a map $v:T^2\to Z=S^1\times S^2$ via
  $v=(0,\tilde v)$. 

  Orient the necks so that $[N_1]+[N_2]=0\in H_1(\Sigma;\setZ)$. The
  periods of $v$ are zero, and the twists $S_1$ and $S_2$ are
  zero. However, let $\chi$ be the function on $\dot \Sigma$ that
  equals 1 on one component of the normalization of $\Sigma$ and 0 on
  the other. Then $v_t=(t\chi,\tilde v)$ is also an $\H$--holomorphic
  map with twists $S_1^t=S_2^t$. So the periods still vanish since
  $S_1^t-S_2^t=0$ for any $t\in \setR$. This gives a non--compact
  family of maps with bounded periods, but $v_t$ does not have minimal
  twist for $t\ne 0$, so these are not minimal twist neck--nodal maps
  and cannot arise as limits of smooth $\H$-holomorphic maps unless
  $t=0$.
\end{example}

We now consider an example where the lift to the symplectization
develops an ``infinite funnel'', i.e. a node is forming on one neck
region (zero period), but the node is pushed off to infinity in the
symplectization. These following two examples motivate why we have
opted to focus our discussion on $\H$--holomorphic maps into the contact
manifold instead of their lifts to the symplectization.
\begin{example}\label{ex:funnel}
  Let $Z=S^3$ be the Hopf fibration over $S^2$ with the canonical
  contact from $\alpha$. Let $(v_t,j_t):\dot T^2\to Z$ be a family of
  $J$--holomorphic maps from the once--punctured torus so that the
  lifts to the symplectization $\hat v_t=(a_t,v_t)$ converge to a
  building of height two where the torus is pinched along two
  homologous neck regions $N_1$ and $N_2$, oriented as the boundary of
  the component of the smooth part of the limit domain that does not
  contain the puncture.  The necks converge to a cylinder over a
  closed characteristic of positive integer period $m_1$ and $m_2$
  on $N_1$ and $N_2$.

  Let $A=[N_1]=-[N_2]\in H_1(T^2;\setZ)$ and $B$ a dual class. Consider
  a family of $j_t$--harmonic circle--valued functions $f_t:T^2\to
  S^1$ with periods $-m_1$ on $A$ and 0 on $B$, and let $u_t=f_t\ast
  v_t$ be the corresponding family of $\H$--holomorphic maps that lift
  to the symplectization as $\hat u_t=(a_t,u_t)$. Then $a_t$ is
  unchanged from before, so the limit is a level 2 curve. But the neck
  $N_1$ now has period 0 (and $N_2$ has period $m_1+m_2>0$). So the neck
  $N_1$ converges to an ``infinite funnel'' in the symplectization. 
\end{example}

\begin{figure}[htbp]
  \centering
  \includegraphics[width=13cm]{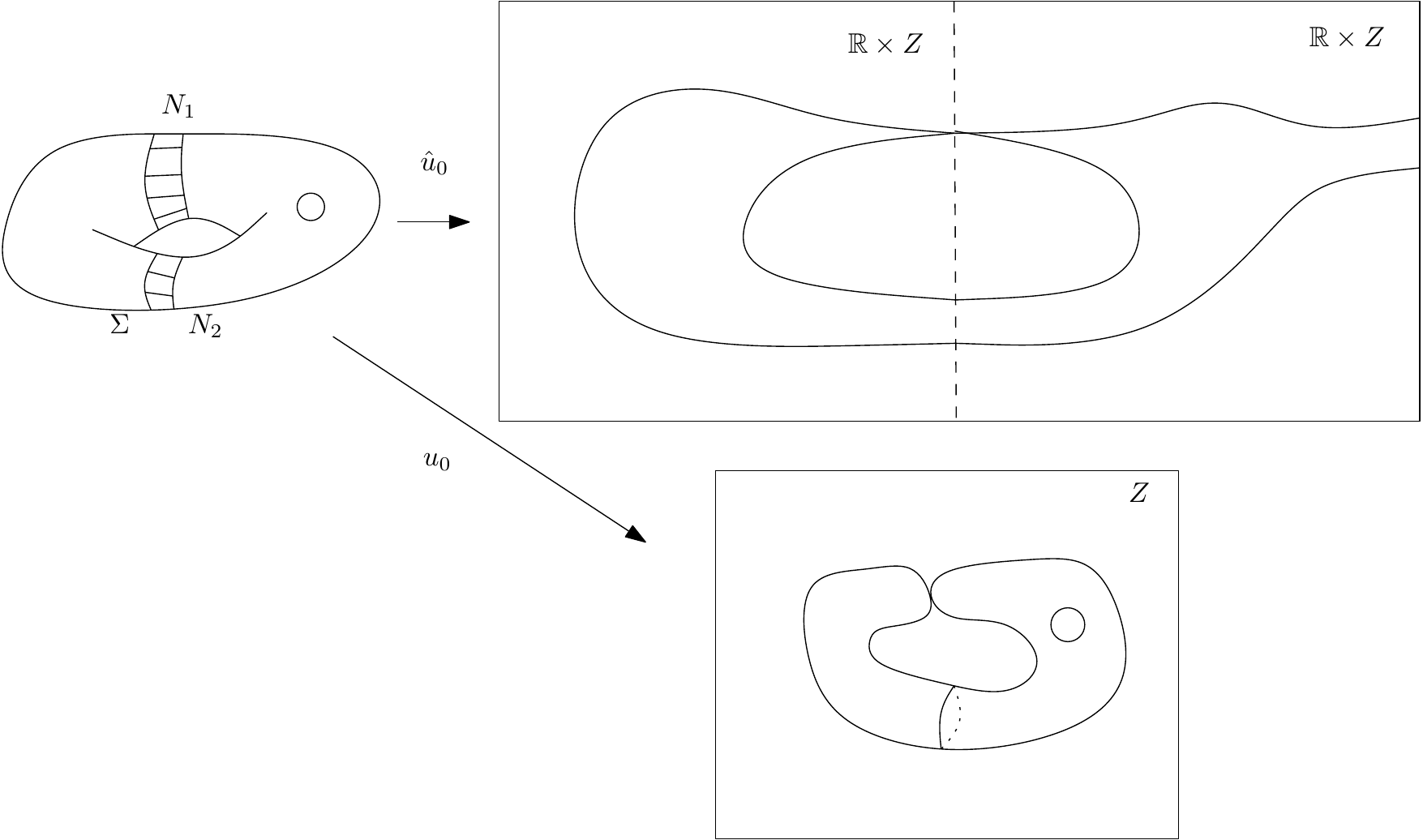}
  \caption{Map with funnel.}
  \label{fig:funnel}
\end{figure}

This example can easily be modified to include non--vanishing twist at
the funnel.
\begin{example}\label{ex:funnel_twist}
  With the same notation as in the previous example choose a function $f_t:T^2\to
  S^1$ with periods $-m_1$ on $A$ and 1 on $B$, and let $u_t=f_t\ast
  v_t$ be the corresponding family of $\H$--holomorphic maps. Then the
  limit $u_0$ has non--zero twist $S_1=\frac{m_1}{m_1+m_2}>0$  at the
  ``infinite funnel'', and twist $S_2=\frac{m_2}{m_1+m_2}$ at the neck
  $N_2$ converging to a closed characteristic. 
\end{example}

\begin{figure}[htbp]
  \centering
  \includegraphics[width=13cm]{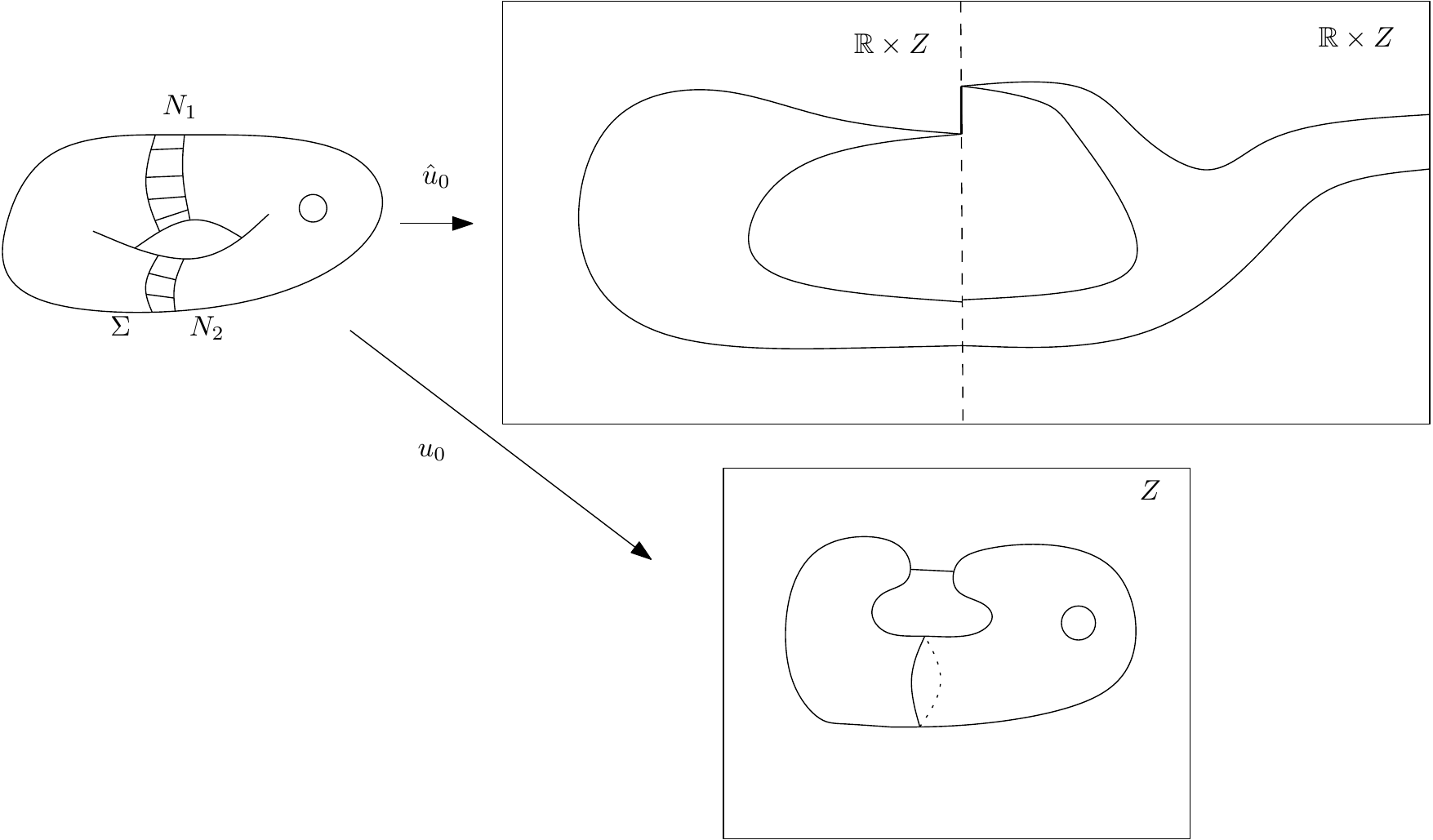}
  \caption{Map with twist at funnel.}
  \label{fig:funnel_twist}
\end{figure}

\section{Non--Compactness Results}
\label{sec:non--compactness} In this section we prove Theorem
\ref{thm:non-compactness}.  Let $\pi:Z\to S^2$ be a principle
$S^1$--bundle with connection 1--form $\alpha$ and curvature form
$\omega=d\alpha>0$. Choose an $S^1$--invariant almost complex
structure $J$, so $(Z,\alpha,\omega,J)$ is a circle--invariant stable
Hamiltonian structure (see Definition \ref{def:circle_invariant}) with
bundle projection $\pi:Z\to V=S^2$ and first Chern class $c_1(Z)\ge
0$.

Consider $\H$--holomorphic maps $v$ from a genus $g$ Riemann surface
$(\Sigma_g,j)$ with punctures $p_1,\ldots p_l\subset\Sigma_g$ with
asymptotics on a closed characteristic $C=\pi_V^{-1}(\infty)$ of
multiplicity $m_1,\ldots m_l$, respectively, where $m_i\ne 0$ for
$i=1,\ldots,l$ and $\sum_{i=1}^l m_i=d\cdot c_1(V)$.

In this situation the action of $H^1(\Sigma,\setZ)$ described in
Section \ref{sec:circle_invariant} on
$Map\left((\hat\Sigma,\del\hat\Sigma),(Z,C)\right)$ is trivial in
homotopy, and we will show that we can moreover find a smooth path of
$\H$--holomorphic maps connecting an $\H$--holomorphic maps $v$ to
$f\ast v$, where $f$ is a non--trivial element in $H^1(\Sigma;\setZ)$.

Consider the projection $u=\pi_V v$ of such maps to the base
$S^2$. These are degree $d$ holomorphic maps with the punctures $p_i$
mapping to $\infty\in S^2$. So $u$ is a rational function with poles
at $p_i$ (with points repeated according to the multiplicity
$m_i$). Up to $C^\ast$ action on $S^2$, degree $d$ rational functions
with these prescribed poles are in 1--1 correspondence with divisors
$D=\sum_{i=1}^d(p_i-q_i)$ of degree $0$ on the domain
with vanishing abelian sum
\begin{eqnarray*}
\mu(D)=\left(\sum_{i=1}^d\int_{q_i}^{p_i}\omega_1,\ldots,
  \sum_{i=1}^d\int_{q_i}^{p_i}\omega_g\right)
\end{eqnarray*} where $\omega_1,\ldots\omega_g$ is a normalized basis
of holomorphic 1--forms on $(\Sigma_g,j)$ (see
e.g. \cite{griffiths_harris} p 235).

Let $D_t=m_1\,p_1(t)+\sum_{i=m_1+1}^dp_i-\sum_{i=1}^dq_i(t)$, $t\in
\setR/\setZ$, be a smooth family of divisors on $\Sigma_g$ with
$\mu(D_t)=0$ so that the loop $\{p_1(t)\}\subset \Sigma$ is
non--trivial in homology, i.e. there exists a loop $\gamma$ so that
$\#(\gamma,\{p_1(t)\})=1$. 

To give an explicit example, consider the flat torus
$\setR/\setZ\times \setR/\setZ$ with basis of holomorphic 1--forms
$\omega_1=dz=dx+i dy$ and let $p_1(t)=(t,0)$, $q_1(t)=(t+\frac12,0)$,
$p_2=(\frac12,\frac12)$, $q_2=(0,\frac12)$. Then, with
$D_t=p_1(t)-q_1(t)+q_2-q_2$,
\begin{eqnarray*}
  \mu(D_t)&=&\int_{q_1(t)}^{p_1(t)}dz+\int_{q_2(t)}^{p_2(t)}dz
  =\int_{q_1(t)}^{p_1(t)}dx+\int_{q_2(t)}^{p_2(t)}dx+i\left(
    \int_{q_1(t)}^{p_1(t)}dy+\int_{q_2(t)}^{p_2(t)}dy\right)\\
  &=&-\frac12+\frac12+0i=0
\end{eqnarray*}
and $\{p_1(t)\}$ intersects the loop $\{y=0\}$ once.

Now consider a corresponding loop of holomorphic maps $\tilde
u:\setR/\setZ\times \Sigma_g\to V$, where $u_t=\tilde
u(t,\cdot)$ is a rational function corresponding to $D_t$. We are
interested in the flux
\begin{eqnarray*} \F(u_0,u_1)[\gamma]
&=&\int_{S^1\times\gamma}u^\ast\omega
=c\,\int_{S^1\times\gamma}u^\ast PD[pt]
=c\,\#(S^1\times\gamma,[u^{-1}(\infty)])\\ 
&=&c\,\sum_{i=1}^l m_i\#(S^1\times\gamma,[\{t,p_i(t)\}])
=c\,m_1\#(\gamma, \{p_i(t)\}_{t\in S^1})\\
&=&c\,m_1\ne0.
\end{eqnarray*}

Lift the family of maps $\tilde u$ to a family of $\H$--holomorphic
maps
\begin{eqnarray*} 
  \tilde v:\setR\times \dot\Sigma_g\to Z
\end{eqnarray*} 
with the prescribed asymptotics at the punctures $p_i$. Choose a
sequence $t_i\in\setR$ and consider the sequence $v_t=\tilde
v(t,\cdot)$ of $\H$--holomorphic maps in the 1--parameter family
$\tilde v$. Then for the loop $\gamma$ in $\dot\Sigma$ the flux
\begin{eqnarray*}
  \F(v_1,v_i)(\gamma)=\int_{\gamma}(v_1^\ast\alpha-v_i^\ast\alpha)
\end{eqnarray*}
is of the order of $m_1(t_i-t_1)$, more precisely it lies in the
interval $[m_1(t_i-t_1)-M,m_1(t_i-t_1)+M]$ where $M=\sup_{t\in
  [0,1]}\F(v(0,\cdot),v(t,\cdot))(\gamma)$. Thus there exists a
subsequence with bounded flux if and only if there exists a
subsequence with bounded $t_i$, i.e. if there exists a convergent
subsequence of $t_i\in\setR$.

In the above example the harmonic 1--form becomes unbounded in the sup
norm w.r.t. the cylindrical metric. But this is not the essential
condition that keeps the space of $\H$--holomorphic maps in a fixed
relative homotopy class from being compactified as the following
example shows, where the harmonic 1--form $|\eta|$ is bounded by
$|da|$ in the cylindrical metric. 

Consider maps from the once--punctured torus to $S^3$ with the standard
structures and let $v_n$ be a sequence of $J$--holomorphic maps so
that the conformal structure is biholomorphic to 
\begin{eqnarray*}
  S^1\times n\cdot S^1
  =\{(e^{2\pi i\,s},e^{2\pi i\,t/n})\},
\end{eqnarray*}
so the complex structure $j\del_s=\del_t$ degenerates, and assume that
the sequence converges to a $J$--holomorphic map $v_\infty$ from a
bubble domain given by two spheres joined at two points, i.e. the
complex structure pinched along two circles in the class of $S^1\times
\{pt\}$. Further assume that the two nodes are non--trivial, i.e. the
wrap a closed characteristic of period $\tau\ne 0$. In particular,
$v^\ast\alpha\approx\tau\,ds$ on the necks. The cylindrical metric is
given by $g_n=ds^2+ dt^2$.

We construct new maps $\hat v_n=f_n\ast v_n$, where
$f_n(s,t)=e^{2\pi i\,t}$ where $(s,t)\in S^1\times n\cdot S^1$. Then
$df_n=dt$, so it is bounded in the cylindrical metric (and also bounded
by $da=v^\ast\alpha$ if $\tau$ is large enough). 

\appendix

\section{Harmonic 1--Forms}
\label{sec:harmonic-1-forms}
Here we establish some properties of harmonic 1--forms that we use.
\begin{lemma}\label{lem:eta_on_necks}
  Any harmonic 1--form $\eta$ on $C_R=[-R,R]\times S^1$ satisfies
  \begin{eqnarray*}
    |\eta(s,t)-(\tilde S\,ds+T\,dt)|
    \le \rho(s)\norm{\eta-(\tilde S\,ds+T\,dt)}_{\del C_R,\infty} .
  \end{eqnarray*}
  where $T$ is the period and $\tilde S$ is the average twist.
\end{lemma}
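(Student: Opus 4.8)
The plan is to pass from the harmonic $1$--form to a holomorphic function and then read off the decay from the vanishing of its zeroth Fourier mode. On the flat cylinder $C_R$ with coordinate $z=s+it$ a $1$--form is harmonic (closed and coclosed) precisely when $\eta=\mathrm{Re}(h\,dz)$ for a holomorphic $h$, and in the flat metric $|\eta|=|h|$. Expanding $h=\sum_{n}c_n e^{nz}$, the zeroth mode $c_0=\tilde S-iT$ contributes exactly $\tilde S\,ds+T\,dt$: the $dt$--component has $s$--independent mean $T=\int_{\{s\}\times S^1}\eta$ by closedness, and the $ds$--component has mean equal to the average twist $\tilde S$. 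Thus $\eta-(\tilde S\,ds+T\,dt)=\mathrm{Re}(g\,dz)$ with $g=h-c_0=\sum_{n\ne 0}c_n e^{nz}$ holomorphic and of vanishing mean on every circle, and the lemma reduces to the pointwise bound $|g(s,t)|\le\rho(s)\,\norm{g}_{\del C_R,\infty}$.

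The heart of the argument is an exponential decay estimate that uses the absence of the $n=0$ mode, and I would work circlewise. By holomorphy the $n$--th Fourier coefficient of $g(s,\cdot)$ is $c_n e^{ns}$, so by Parseval (with the normalized measure on $S^1$)
\[
  \norm{g(s,\cdot)}_{L^2}^2=\sum_{n>0}|c_n|^2 e^{2ns}+\sum_{n<0}|c_n|^2 e^{2ns}.
\]
For $n\ge 1$ and $s\le R$ one has $e^{2ns}\le e^{2(s-R)}e^{2nR}$, and for $n\le -1$ and $s\ge -R$ one has $e^{2ns}\le e^{-2(s+R)}e^{-2nR}$; since the positive-- and negative--frequency projections are $L^2$--contractions this yields
\[
  \norm{g(s,\cdot)}_{L^2}^2\le M^2\bigl(e^{2(s-R)}+e^{-2(s+R)}\bigr)=\frac14\,\rho(s)^2 M^2,
\]
where $M=\norm{g}_{\del C_R,\infty}$ and we used $\rho(s)^2=8e^{-2R}\cosh(2s)=4(e^{2(s-R)}+e^{-2(s+R)})$ together with $\norm{g(\pm R,\cdot)}_{L^2}\le M$. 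This is exactly where the hypothesis bites: the missing $n=0$ term is the only one that would not decay toward the middle of the cylinder, and its absence produces the $\cosh$ envelope.

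Finally I would upgrade this circlewise $L^2$ bound to the claimed pointwise sup bound by the sub--mean--value property of the holomorphic function $g$ over small disks, noting that $\rho$ varies slowly on the scale of such a disk so that the $\cosh$ shape is reproduced. The main obstacle is essentially one of constants: the interior decay is genuinely a consequence of the mean--zero condition rather than of the maximum principle (indeed $\rho$, $\rho^2$ and $\log\rho$ are all subharmonic, so no naive barrier dominates $|g|$ from above), and the passage from $L^2$ to the sup norm costs a constant that must be reabsorbed. I expect the normalization built into $\rho$ — the factor $8$, equivalently the boundary slack $\rho(\pm R)\ge 2$ — to be precisely what compensates this loss; checking that the mean--value radius can be chosen so that no further constant is incurred is the one routine--but--delicate point that remains.
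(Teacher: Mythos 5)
Your reduction to a holomorphic $g$ with no zeroth mode and your circlewise Parseval estimate are correct, and in fact tighter than what the paper writes: the paper expands the two components in real Fourier modes (products of $\sinh(ns),\cosh(ns)$ with $\sin(nt),\cos(nt)$, with the $n=0$ and linear--in--$s$ terms excluded for the same reason you exclude $c_0$), bounds each mode profile by $e^{-R}\cosh(s)\le\rho(s)$, and sums tacitly, whereas your identity $\norm{g(s,\cdot)}_{L^2}^2\le\frac14\rho(s)^2M^2$ handles all modes at once. The genuine gap is in your final step, and the point you flag as ``routine--but--delicate'' in fact fails as proposed: the sub--mean--value upgrade cannot be made lossless for \emph{any} choice of radius. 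Feeding your circlewise bound into the mean value inequality over a disk $D_r(s_0,t_0)$ and using $\int_{s_0-r}^{s_0+r}\cosh(2s)\,ds=\cosh(2s_0)\sinh(2r)$ gives exactly
\[
  |g(s_0,t_0)|^2\le\frac{1}{\pi r^2}\int_{D_r}|g|^2
  \le\frac{\sinh(2r)}{2r^2}\,\rho(s_0)^2M^2,
\]
and $\min_{r>0}\sinh(2r)/(2r^2)\approx 1.8$ (attained where $\tanh(2r)=r$, i.e.\ $r\approx 0.96$), so every radius costs a factor $\approx 1.35$ against the claimed constant--$1$ inequality; the loss sits in the mean--value inequality itself, not in the slow variation of $\rho$, so no tuning of the radius recovers it. Moreover, for $|s_0|>R-r$ a disk of useful radius does not fit inside $C_R$, and $\rho$ can still be $<1$ there (e.g.\ $\rho(R-1)^2=8e^{-2R}\cosh(2R-2)\approx 4e^{-2}$), so the boundary slack $\rho(\pm R)\ge 2$ does not propagate inward through disk averages.

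The repair stays entirely inside your framework but replaces disk averaging by two cheaper tools. For $|s|\le R-\ln 2$ apply Cauchy--Schwarz pointwise to the series: $\sum_{n\ge1}|c_n|e^{ns}\le\bigl(\sum_{n\ge1}|c_n|^2e^{2nR}\bigr)^{1/2}\bigl(\sum_{n\ge1}e^{2n(s-R)}\bigr)^{1/2}\le M\,e^{s-R}\bigl(1-e^{2(s-R)}\bigr)^{-1/2}$, and symmetrically for $n\le -1$; since $e^{2(s-R)}\le\frac14$ in this zone, $|g(s,t)|\le\frac{2}{\sqrt3}M\bigl(e^{s-R}+e^{-(s+R)}\bigr)\le\sqrt{2/3}\,\rho(s)M$, using $e^{s-R}+e^{-(s+R)}\le\rho(s)/\sqrt2$. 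For $|s|\ge R-\ln 2$ one has $\rho(s)^2\ge 4e^{2(|s|-R)}\ge 1$, so the maximum principle for the subharmonic function $|g|$ already gives $|g|\le M\le\rho(s)M$ there. With this substitution your argument closes with the stated constant; as written, the last step is the one place where it genuinely breaks.
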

\begin{proof}
  Any harmonic 1--form $\eta$ on $C_R$ is of the form
  \begin{eqnarray*}
    \eta(s,t)=(\tilde S\,ds+T\,dt)+f(s,t)ds+g(s,t)dt
  \end{eqnarray*}
  where $f$ and $g$ are harmonic function with vanishing
  average. Expanding $f$ and $g$ into Fourier series in the
  $t$--variable shows that $f$ and $g$ are sums of terms consisting of
  products of $\sinh(ns)$ and $\cosh(ns)$ with $\sin(nt)$ and
  $\cos(nt)$ plus a linear term in $s$. Remembering that
  $d\eta=d(\eta\circ j)=0$ we see that the term that is linear in $s$
  must in fact vanish. Since $f\,ds+g\,dt$ have vanishing average
  twist and center action, only terms with $n>0$ appear in the
  sums. Now note that
  \begin{eqnarray*}
    \cosh(ns)\le 2\cosh(s)\cosh((n-1)s),\qquad\mathrm{and}\quad
    |\sinh(ns)|\le 2\cosh(s)|\sinh((n-1)s)|
  \end{eqnarray*}
  and thus
  \begin{eqnarray*}
    \frac{\cosh(ns)}{\cosh(nR)}\le e^{-R}\cosh(s)\le \rho(s),
    \qquad
    \frac{\cosh(ns)}{\cosh(nR)}\le e^{-R}\cosh(s)\le \rho(s),
  \end{eqnarray*}
  remembering that $\rho^2(s)=8 e^{-2R}\cosh(2s)$ and thus
  $e^{-R}\cosh(s)\le \rho(s)$.

  Using the Fourier expansions of $f$ we obtain
  \begin{eqnarray*}
    |f(s,t)|\le \rho(s)\norm{f(s,t)}_{\del C_R,\infty} 
  \end{eqnarray*}
  with an analogous estimate holding for $g$ which gives the desired result.
\end{proof}

\begin{lemma}\label{lem:bounded_twist}
  Fix a basis $\{\gamma_i\}_{i=1,\ldots 2g}$ of $H_1(\Sigma;\setZ)$
  and let $C_R=[-R,R]\times S^1$ with $R>4$ be a neck cylinder in
  $\Sigma$. 

  There exists a constant $c>0$ so that for any $\eta$ be a harmonic
  1--form on $\Sigma$ with periods on $\{\gamma_i\}$ bounded by $C$ the
  twist $S$ of $\eta$ on $C_R$ satisfies $S_n<c\,C$.
\end{lemma}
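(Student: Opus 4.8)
The plan is to reduce the estimate on the twist to a $1/R$--decay estimate on the \emph{average} twist and then to see that the length of the neck is exactly what forces this decay. By Definition~\ref{def:twist} we have $S=2R\,\tilde S$, so it suffices to prove $|\tilde S|\le cC/R$. Recall (as in the computation following Definition~\ref{def:twist} and in the proof of Lemma~\ref{lem:minimal_twist}) that $\tilde S$ is the period $\int_{\gamma}\eta\circ j$ of the conjugate harmonic form along the core $\gamma$ of the cylinder, while $T=\int_{\gamma}\eta$ is the ordinary period. First I would invoke Lemma~\ref{lem:eta_on_necks} to discard the oscillating Fourier modes: on $C_R$ we may write $\eta=\tilde S\,ds+T\,dt+(f\,ds+g\,dt)$, where the remainder has vanishing $t$--average on each slice and is pointwise bounded by $\rho(s)$ times its boundary oscillation. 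Since $\int_{S^1}f\,dt=0$ this remainder contributes nothing to either $S$ or $\tilde S$, and since $[\gamma]$ is a fixed integer combination of the basis $\{\gamma_i\}$, the period $T$ is bounded by $c_0C$. Thus everything reduces to controlling the single number $\tilde S$.

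The key observation is that the smallness of $\tilde S$ is a \emph{global} fact and cannot be read off the cylinder alone, since $(T,\tilde S)$ are simply the real and imaginary parts of the constant Fourier mode of $\eta$ on $C_R$ and are a priori independent. To exploit the global structure I would extend $\gamma$ to a symplectic basis $\{A_1,B_1,\dots,A_g,B_g\}$ with $A_1=[\gamma]$, write $\eta=\Re\phi$ for the unique holomorphic differential $\phi=\sum_k c_k\omega_k$ normalized by $\int_{A_l}\omega_k=\delta_{kl}$, and note that $T=\Re c_1$ while $\tilde S=\Im c_1$ up to sign. All $A$-- and $B$--periods of $\eta$ are fixed integer combinations of the $\int_{\gamma_i}\eta$ and hence bounded by $c_0C$; in particular the $B_1$--period $\int_{B_1}\eta=\Re\big(\sum_k c_k\tau_{1k}\big)$ is bounded, where $\tau$ is the period matrix of $(\Sigma,j)$.

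The hard part, which I expect to be the main obstacle, is the uniform control of $\tau$ as the neck lengthens. As the flat cylinder $C_R$ in the class $A_1$ grows, its modulus forces the linear growth $\Im\tau_{11}=2R+O(1)$, whereas $\Re\tau_{11}$ and the complementary block $(\tau_{kl})_{k,l\ne 1}$ stay bounded and the latter nondegenerate; the exponential decay of $\omega_1$ off the neck that underlies this is again supplied by Lemma~\ref{lem:eta_on_necks}. Feeding this in, the bounded $B$--periods for $k\ne 1$ first yield $\Im c_k=O(C)$ by inverting the bounded complementary block, and then $\int_{B_1}\eta=\Re(c_1\tau_{11})+O(C)=\pm 2R\,\tilde S+O(C)$, so that $2R\,|\tilde S|\le |\int_{B_1}\eta|+O(C)\le cC$, i.e. $|S|\le cC$ with $c$ depending only on the fixed homological data. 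The delicate point throughout is that every error term must be tracked with explicit $R$--dependence so that the factor $2R$ in $S=2R\,\tilde S$ is exactly absorbed by the decay $\tilde S=O(C/R)$: a soft finite--dimensional argument proves the bound for each fixed cylinder, but only the degeneration estimate $\Im\tau_{11}\sim 2R$ together with the uniform invertibility of the complementary block gives the uniformity in $R$ that the degenerating families of Section~\ref{sec:energy} require.
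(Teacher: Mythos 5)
Your opening reductions are all correct and consistent with the paper's conventions: $S=2R\,\tilde S$, the identification of $\tilde S$ (up to sign) with the period of the conjugate form $\eta\circ j$ along the core, the discarding of the oscillating modes, and $T=\mathrm{Re}\,c_1$, $\tilde S=\pm\mathrm{Im}\,c_1$ for the holomorphic differential $\phi$ with $\eta=\mathrm{Re}\,\phi$ in an $A$--normalized basis. The genuine gap is exactly at the point you flag and then merely assert: the uniform degeneration asymptotics of the period matrix. Lemma \ref{lem:eta_on_necks} does not supply them --- it estimates a given harmonic form \emph{on} a neck in terms of its own boundary sup norm, and says nothing about the normalized differentials $\omega_k$ (whose boundary values on $\del C_R$ are not a priori controlled) or about $\tau$; in particular it does not yield $\mathrm{Im}\,\tau_{11}=2R+O(1)$ with a remainder uniform over the families at hand, nor $\mathrm{Re}\,\tau=O(1)$. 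Worse, the specific statement you invert, that the complementary block $(\tau_{kl})_{k,l\ne 1}$ ``stays bounded and nondegenerate,'' is false in the regime where the lemma is actually used (Theorem \ref{thm:bounded_gradient}, Lemma \ref{lem:harmonic_bounded}): there several necks degenerate simultaneously, so other entries $\mathrm{Im}\,\tau_{kk}$ blow up as well, and homologous necks pinch along classes that are not independent, so the pinching classes cannot all be taken among disjoint basis elements $A_k$. What your scheme really requires is a uniform estimate of order $1/R$ on the relevant row of $(\mathrm{Im}\,\tau)^{-1}$, with all cross terms $c_k\tau_{1k}$ tracked, valid for arbitrary multi--neck degenerations --- a statement at the level of Fay/Masur degeneration theory, substantially harder than the lemma itself. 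As written, the proposal reduces the lemma to an unproven, and in part incorrectly stated, compactness assertion about $\tau$.

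For comparison, the paper sidesteps period matrices entirely with a short variational argument: after a change of symplectic basis one may assume the periods live on the class dual to the core, and one compares $\eta$ with the explicit closed test form $\nu=\frac{C}{\tilde C}\beta(s)\,ds$ supported inside the neck, which has the same periods and $\norm{\nu}_{L^2}^2\le 2C^2/R$; since the harmonic representative minimizes the $L^2$ norm among closed forms with the same periods, while the constant mode alone forces $\norm{\eta}_{L^2(C_R)}^2\ge \tilde S^2\cdot 2R=S^2/(2R)$, the two bounds combine to $S\le c\,C$ with $c$ depending only on the fixed homological basis change --- manifestly uniform in $R$ and in $j$, which is the uniformity your route struggles to produce. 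Note that the lower bound $S^2/(2R)\le\norm{\eta}_{L^2}^2$ is precisely the coercive inequality your period--matrix computation is trying to re--derive through $\mathrm{Im}\,\tau$; pairing it with an explicit competitor, rather than with asymptotics of $\tau$, is what makes the estimate both elementary and uniform.
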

\begin{proof}
  Using a change of symplectic basis we may assume that 
  $\gamma_1$ is in the class of $C_R$ and $\gamma=\gamma_{g+1}$ is its
  dual basis element, and that all periods with respect to this new
  basis are bounded by $C\,c/2$. Assume without loss of generality that
  $\eta$ has vanishing periods on all of the these basis elements
  except $\gamma$. 

  Let $\beta:[-R,R]\to [0,1]$ be a bump function supported on
  the interior of the interval that equals 1 at all points with
  distance at least 1 from the boundary with integral $\tilde C$. Note
  that $2(R_n-2)\le\tilde C\le 2R_n$. Now consider the closed 1--form
  $\nu(s,t)=\frac{C}{\tilde C}\beta(s)ds$ on $C_R$. It has the same
  periods as $\eta$, and $L^2$ norm
  \begin{eqnarray*}
    \langle\nu,\nu\rangle
    \le\frac{C^2}{\tilde C^2}2R\le C^2\frac{R}{(R-2)^2}\le 2\frac{C^2}{R}.
  \end{eqnarray*}
  On the other hand $\eta=\tilde S\,ds+\mu$, where $\mu$ has average 0
  on $C_R$. Then
  \begin{eqnarray*}
    \langle\eta,\eta\rangle=\langle\mu,\mu\rangle+ \tilde S^22R\ge\frac {S^2}{2R}.
  \end{eqnarray*}
  Since $\eta$ minimizes the $L^2$ norm among all closed forms with
  the same periods, we conclude that the twist $S<c\,C$.
\end{proof}

\begin{lemma}\label{lem:harmonic_bounded}
  Let $\{\gamma_i\}$ be a basis of $H_1(\Sigma;\setZ)$. 
  There exists a constant $C>0$, independent on the complex structure
  $j$ on $\Sigma$ so that for any harmonic 1--form
  $\eta$ on $(\Sigma,j)$
  \begin{eqnarray*}
    \norm{\eta}_\infty\le C\norm{P(\eta)}
  \end{eqnarray*}
  where $P(\eta)$ is the period map w.r.t. $\{\gamma_i\}$ and the
  $\norm{.}_\infty$ is taken with respect to the cylindrical metric.
\end{lemma}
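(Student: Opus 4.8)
The plan is to argue by contradiction, combining Deligne--Mumford compactness of the domains with the pointwise neck estimate of Lemma \ref{lem:eta_on_necks}. Suppose no uniform constant exists. Then there are complex structures $j_n$ and $j_n$--harmonic $1$--forms $\eta_n$ which, after normalizing so that $\norm{\eta_n}_\infty=1$, satisfy $\norm{P(\eta_n)}\to 0$; here I use that the period map is injective on $\H(\Sigma,j_n)$, so $\eta_n\ne 0$ forces $P(\eta_n)\ne 0$. Passing to a subsequence I arrange $j_n\to j_0$ in $\overline\M_{g,n}$. The limit may be nodal, and I fix the degeneration type so that a definite collection of simple closed curves $A_1,\dots,A_k$ is pinched, producing neck cylinders $C_{R_n^i}^{(i)}=[-R_n^i,R_n^i]\times S^1$ with $R_n^i\to\infty$, while the thick part converges smoothly to the complement $\tilde\Sigma^\ast$ of the nodes in the normalization $\tilde\Sigma$.

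On a fixed compact exhaustion piece $K\subset\tilde\Sigma^\ast$, chosen so large that for large $n$ the set $\Sigma\setminus K$ consists exactly of the neck cylinders, the $\eta_n$ are $j_n$--harmonic with $\norm{\eta_n}_\infty\le 1$. Since in local conformal coordinates the components of a harmonic $1$--form are harmonic functions, interior elliptic estimates and Arzel\`a--Ascoli yield a subsequence converging in $C^\infty_{\loc}$ on $K$ to a $j_0$--harmonic $1$--form $\eta_0$ on $\tilde\Sigma$. After an $n$--independent integral symplectic change of basis adapted to the degeneration I may assume that the pinched curves $A_i$ are basis elements and that the surviving basis curves span $H_1(\tilde\Sigma;\setR)$; each such surviving period of $\eta_0$ is the limit of the corresponding period of $\eta_n$, hence vanishes. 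Therefore $\eta_0$ has all periods zero on $\tilde\Sigma$, so by injectivity of the period map $\eta_0\equiv 0$. In particular $\eta_n\to 0$ uniformly on $K$, and consequently $\norm{\eta_n}_{\del C_{R_n^i}^{(i)},\infty}\to 0$ on every neck boundary.

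It remains to exclude concentration of the unit sup norm inside the lengthening necks, and this is the main obstacle. On each neck I split $\eta_n=(\tilde S_i^n\,ds+T_i^n\,dt)+\big(\eta_n-(\tilde S_i^n\,ds+T_i^n\,dt)\big)$. The center action $T_i^n=\int_{A_i}\eta_n$ is a period, so $T_i^n\to 0$; the twist satisfies $|S_i^n|\le c\,\norm{P(\eta_n)}\to 0$ by Lemma \ref{lem:bounded_twist}, whence the average twist $\tilde S_i^n=\frac{S_i^n}{2R_n^i}\to 0$ as well, and the model term tends to $0$ uniformly. Lemma \ref{lem:eta_on_necks} bounds the remaining oscillatory part by $\rho(s)\,\norm{\eta_n-(\tilde S_i^n\,ds+T_i^n\,dt)}_{\del C_{R_n^i}^{(i)},\infty}$, and since $\rho$ is maximized on $\del C_{R_n^i}^{(i)}$ with $\rho\le 2$ there, the supremum of $|\eta_n|$ over each neck is at most $o(1)+2\,\norm{\eta_n}_{\del C_{R_n^i}^{(i)},\infty}\to 0$. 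Together with $\sup_K|\eta_n|\to 0$ this gives $\norm{\eta_n}_\infty\to 0$, contradicting $\norm{\eta_n}_\infty=1$. The delicate step is precisely this last one: without the neck estimate the supremum could escape into the growing thin part, where $C^\infty_{\loc}$ convergence on $\tilde\Sigma^\ast$ gives no control, and it is Lemma \ref{lem:eta_on_necks} together with the vanishing of the center action and twist that pins the neck contribution to its boundary values.
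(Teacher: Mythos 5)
Your overall route coincides with the paper's (argue by contradiction after normalizing, extract a $C^\infty$ limit on the thick part, control the necks via Lemma \ref{lem:eta_on_necks} and the twists via Lemma \ref{lem:bounded_twist}, and conclude with Hodge-theoretic vanishing), but there is a genuine gap at the step ``$\eta_0$ has all periods zero on $\tilde\Sigma$, so by injectivity of the period map $\eta_0\equiv 0$.'' The $C^\infty_{\loc}$ limit is a priori a harmonic $1$--form only on the \emph{punctured} normalization $\tilde\Sigma^\ast$, which carries cylindrical ends, and there the period map is \emph{not} injective: the form $ds$ on a half-cylinder end $[0,\infty)\times S^1$ is harmonic, has sup norm $1$ in the cylindrical metric, and all of its periods vanish. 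This is exactly the ``twist at the puncture'' mode that the lemma must rule out, so it cannot be absorbed into the word ``injectivity.'' Without first extending $\eta_0$ over the punctures you cannot conclude $\eta_0\equiv 0$, and then your assertion that $\eta_n\to 0$ uniformly on $K$ --- the sole input for making the neck boundary norms small in your third paragraph --- has no support. The paper's proof explicitly inserts the missing step: it uses Lemma \ref{lem:eta_on_necks} to show that the limit extends over the punctures to a harmonic $1$--form with vanishing periods on the closed normalization, and only then invokes the Hodge theorem.

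The repair uses only ingredients you already deploy, in a different order: apply the neck estimate \emph{before} concluding $\eta_0\equiv 0$. On each neck the boundary norm of the oscillatory part is bounded by $\norm{\eta_n}_\infty$ plus the model term, hence by $1+o(1)$, while $T_i^n\to 0$ and $S_i^n\to 0$ (Lemma \ref{lem:bounded_twist}); since at distance $d$ from the neck boundary, i.e.\ at $|s|=R_n^i-d$, one has $\rho^2(s)=8e^{-2R_n^i}\cosh(2s)\le 8e^{-2d}$, Lemma \ref{lem:eta_on_necks} gives
\begin{eqnarray*}
  |\eta_n(s,t)|\le o(1)+2\sqrt{2}\,e^{-d}\bigl(1+o(1)\bigr),
\end{eqnarray*}
and letting $n\to\infty$ shows $|\eta_0|\le 2\sqrt{2}\,e^{-d}$ along each cylindrical end. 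This exponential decay makes the singularity removable in punctured-disc coordinates, so $\eta_0$ extends harmonically to the closed normalization; now vanishing periods plus the Hodge theorem legitimately force $\eta_0\equiv 0$, and the remainder of your argument runs exactly as written. (A cosmetic point: on $\del C_R$ one has $\rho(\pm R)=2\sqrt{1+e^{-4R}}>2$, so your claimed bound $\rho\le 2$ there should read, say, $\rho\le 2\sqrt{2}$; this affects nothing.)
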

\begin{proof}
  Since both sides are linear under scaling, it suffices to prove the
  statement for harmonic 1--forms with $P(\eta)=1$.  If this was not
  true, there exists a sequence of surfaces $(\Sigma,j_n)$ and
  harmonic 1--forms $\eta_n$ so that
  \begin{eqnarray*}
    M_n=\norm{\eta_n}_\infty >n\norm{P(\eta_n)}=n. 
  \end{eqnarray*}
  Consider the rescaled harmonic 1--forms $\mu_n=\eta_n/M_n$ with
  periods less than $\frac1n$. By elliptic regularity and Arzela
  Ascoli we can extract a subsequence that converges uniformly in
  $C^\infty$ to a the thick part of $(\Sigma,j_n)$. Using Lemma
  \ref{lem:eta_on_necks} we see that $\eta_n$ then also converge
  uniformly in $C^\infty$ on the necks, so $\eta_n$ converge uniformly
  in $C^\infty$ to a harmonic 1--form $\eta$ with vanishing periods on
  the limit surface with cylindrical ends. The periods of $\eta_n$ on
  the neck regions converge to 0 uniformly, and the twist converge to
  zero by Lemma \ref{lem:bounded_twist}. By Lemma
  \ref{lem:eta_on_necks} we then see that the limit $\eta$ still has
  sup norm 1. The cylindrical ends are conformally equivalent to
  punctured disks, and $\eta$ extends to a harmonic 1--form over the
  punctures to a 1--form on the normalization of the closed nodal
  surface with with vanishing periods, again using Lemma
  \ref{lem:eta_on_necks}. By the Hodge Theorem $\eta$ vanishes
  identically on the normalization, contradicting that the sup norm of
  $\eta$ is 1.
\end{proof}


\newcommand{\etalchar}[1]{$^{#1}$}
\providecommand{\bysame}{\leavevmode\hbox to3em{\hrulefill}\thinspace}
\providecommand{\MR}{\relax\ifhmode\unskip\space\fi MR }
\providecommand{\MRhref}[2]{%
  \href{http://www.ams.org/mathscinet-getitem?mr=#1}{#2}
}
\providecommand{\href}[2]{#2}

\end{document}